\newtheorem{thm}{Theorem}[section]
\newtheorem{prop}[thm]{Proposition}
\newtheorem{lem}[thm]{Lemma}
\newtheorem{cor}[thm]{Corollary}
\theoremstyle{definition}
\newtheorem{defn}[thm]{Definition}
\theoremstyle{remark}
\newtheorem{rem}[thm]{Remark}
\newcommand{\ds}{\displaystyle}                         
\definecolor{DarkBlue}{rgb}{0.1,0.1,0.5}
\definecolor{Red}{rgb}{0.9,0.0,0.1}
\definecolor{lightred}{rgb}{1, 0.75, 0.75}
\colorlet{light-yellow}{orange!60!yellow}
\colorlet{light-blue}{blue!15}
\colorlet{darkblue}{blue!70!black}
\newcommand\reallywidetilde[1]{\ThisStyle{%
  \setbox0=\hbox{$\SavedStyle#1$}%
  \stackengine{-.1\LMpt}{$\SavedStyle#1$}{%
    \stretchto{\scaleto{\SavedStyle\mkern.2mu\sim}{.5467\wd0}}{.7\ht0}%
  }{O}{c}{F}{T}{S}%
}}
\begin{document}

\title[Higher multi-Courant algebroids]{Higher multi-Courant  algebroids}
\author{P. Antunes}
\address{CMUC, Department of Mathematics, University of Coimbra, 3001-501 Coimbra, Portugal}
\email{pantunes@mat.uc.pt}
\author{J.M. Nunes da Costa}
\address{CMUC, Department of Mathematics, University of Coimbra, 3001-501 Coimbra, Portugal}
\email{jmcosta@mat.uc.pt}
\begin{abstract}
The binary bracket of a Courant algebroid structure on $(E,\langle \cdot,\cdot \rangle)$ can be extended to a $n$-ary bracket on $\Gamma(E)$, yielding a multi-Courant algebroid. These $n$-ary brackets form a Poisson algebra and were defined, in an algebraic setting, by Keller and Waldmann. We construct a higher geometric version of Keller-Waldmann Poisson algebra and define higher multi-Courant algebroids. As Courant algebroid structures can be seen as degree $3$ functions on a graded symplectic manifold of degree $2$, higher multi-Courant structures  can be seen as functions of degree $n\geq 3$ on that graded symplectic manifold.
\end{abstract}

\maketitle

\textbf{Mathematics Subject Classifications (2020).} 53D17, 17B70, 58A50.

\

\textbf{Keywords.} Courant algebroid, graded symplectic manifold, graded Poisson algebra

%
\section{Introduction}
Aiming at  interpreting the bracket on the Whitney sum $TM \oplus T^*M$ of the tangent and cotangent bundle of a smooth manifold $M$, proposed by Courant in \cite{Courant}, Liu, Weinstein and Xu \cite{Liu_Weinstein_Xu} introduced the concept of Courant algebroid on a vector bundle $E\to M$. This vector bundle is equipped with a fiberwise symmetric bilinear form $\langle \cdot, \cdot \rangle$, a Leibniz bracket on the space $\Gamma(E)$ of sections and a morphism of vector bundles $\rho:E \to TM$, called the anchor, satisfying a couple of compatibility conditions. In \cite{royContemp}, Roytenberg  described a Courant algebroid as a degree $2$ symplectic graded manifold $\mathcal{F}_E$ together with a degree $3$ function $\Theta$ satisfying $\{\Theta, \Theta \}=0$, where $\{\cdot , \cdot \}$ is the graded Poisson bracket corresponding to the graded symplectic structure.  The morphism $\rho$ and the Leibniz bracket on $\Gamma(E)$ are recovered as derived brackets (see \ref{subsection2.2}).

The Courant bracket, or its no skew-symmetric version called Dorfman bracket, is a binary bracket. The first attempt to extend it to a $n$-ary bracket was given, in purely algebraic terms, by Keller and Waldmann in \cite{Keller-Waldmann}.  They built a graded Poisson algebra $\mathcal{C}=\oplus_{n\geq 0} \mathcal{C}^n$ of degree $-2$, with $\mathcal{C}^{n+1}$ the collection of $n$-ary brackets. Elements of  $\mathcal{C}^{3}$ that are closed with respect to the graded Poisson bracket correspond to Courant structures. The graded Poisson algebra $\mathcal{C}$, that we call Keller-Waldmann Poisson algebra, is a complex that controls deformation. Keller-Waldmann algebra elements are $n$-ary  brackets  and each bracket comes with a symbol. In degree $3$, the symbol is the anchor of the Courant structure.

We consider the geometric counterpart of the Keller-Waldmann Poisson algebra, and our starting point is a vector bundle $E \to M$ equipped with a fiberwise symmetric bilinear form $\langle \cdot, \cdot \rangle$. This is also the setting in \cite{A10}, where the first author has started the study of the  Keller-Waldmann algebra under a  geometric point of view. In this case, the Keller-Waldmann Poisson algebra is denoted by $\mathcal{C}(E)= \oplus_{n\geq 0}\mathcal{C}^n(E)$ and its elements are pre-multi-Courant brackets  on $\Gamma(E)$. The prefix \emph{pre} means that  elements  $C\in \mathcal{C}(E)$ do not need to close with respect to the Poisson bracket, denoted by  $[\cdot,\cdot]_{_{K\!W}}$. If $[C,C]_{_{K\!W}}=0$, the triple $(E, \langle \cdot, \cdot \rangle, C)$ is a multi-Courant algebroid. At this point it is important to notice that, for $n\neq2$,  what is called $n$-Courant bracket in Remark 3.2 of \cite{Keller-Waldmann} is not the same as our $n$-ary Courant bracket,  because we require the closedness with respect to  the $[\cdot,\cdot]_{_{K\!W}}$ bracket, while in \cite{Keller-Waldmann} the authors ask the closedness with respect to a  different bracket. For $n=2$, the two brackets coincide (see Remark~\ref{rem_Filippov}).

As it is noticed in \cite{Keller-Waldmann}, each pre-multi-Courant bracket $C \in \mathcal{C}^n(E)$ determines a map $ \reallywidetilde{C} :  \Gamma(E) \times \stackrel{(n)}{\ldots} \times \Gamma(E) \to C^\infty(M)$, defined by means of $C$ and $\langle .,.\rangle$. Denoting by $\reallywidetilde{\mathcal{C}}^n(E)$ the collection of maps $ \reallywidetilde{C}$, this correspondence defines an isomorphism between  ${\mathcal{C}}(E)$ and  $\reallywidetilde{\mathcal{C}}(E)=\oplus_{n\geq 0}\reallywidetilde{\mathcal{C}}^n(E)$ which is indeed an isomorphism of graded Poisson algebras, as it is proved in Section \ref{Multi-Courant}.

Very recently,  Cueca and Mehta \cite{Cueca-Mehta} showed that there is an isomorphism of graded commutative algebras between $(\mathcal{F}_E, \cdot)$ and  $(\reallywidetilde{\mathcal{C}}(E), \wedge)$, where $\cdot$ and $\wedge$ denote the
 associative graded commutative products of the two Poisson algebras $\mathcal{F}_E$ and $\reallywidetilde{\mathcal{C}}(E)$, respectively. They also remarked that the isomorphism is indeed a Poisson isomorphism, but they don't prove this since they don't exhibit the Poisson bracket on $\reallywidetilde{\mathcal{C}}(E)$.

The main goal of this paper is to give a higher version of the Keller-Waldmann Poisson algebra and define higher multi-Courant algebroids. This means that we consider higher (pre-)multi-Courant brackets on $\Gamma(\wedge^{\geq 1}E)$, and not only on $\Gamma(E)$.  Each higher (pre-)multi-Courant bracket $ \mathfrak{C} \in \mathcal{C}^n(\wedge^{\geq 1}E)$
 can be identified as the extension by derivation  of an element $\reallywidetilde{C} \in \reallywidetilde{\mathcal{C}}^n(E)$.
This construction leads to a graded Poisson algebra $\mathcal{C}(\wedge^{\geq1}E)=\oplus_{n\geq 0}\mathcal{C}^n(\wedge^{\geq1}E)$ with a Poisson bracket $[\![\cdot, \cdot]\!]$ that extends $[\cdot,\cdot]_{_{K\!W}}$.  We prove that  $\reallywidetilde{\mathcal{C}}(E)$ and $\mathcal{C}(\wedge^{\geq1}E)$ are isomorphic graded Poisson algebras and therefore $\mathcal{C}(E)$ and $\mathcal{C}(\wedge^{\geq1}E)$ are isomorphic too. In Remark \ref{Remark_4_7} we explain why an element of $\mathcal{C}(\wedge^{\geq1}E)$ can not be directly identified with the extension by derivation of an element of ${\mathcal{C}}(E)$.

In literature we find several Courant bracket extensions, in different directions (see \cite{zambon} and references therein). In \cite{zambon} Zambon defines  higher analogues of Courant algebroids, replacing the vector bundle $TM\oplus T^*M$, originally considered in \cite{Courant}, by $TM \oplus \wedge^pT^*M$, $p\geq 0$.  In  an algebraic setting,  Roytenberg \cite{Roytenberg_2009} extends the usual Courant bracket to a $n$-ary bracket on $\Gamma(E)$,
 and each $n$-ary bracket comes with a collection of symbols that control the defect of their skew-symmetry and also the skew-symmetry of the bracket. In \cite{Peddie}, under the perspective of Loday-infinity algebras and using Voronov's derived bracket construction \cite{voronov2}, Peddie defines $n$-ary Dorfman brackets on $\Gamma(E)$ and $C^\infty(M)$. Having started from  the Keller-Waldmann algebra, whose elements are $n$-ary brackets on $\Gamma(E)$, we were led to an extension of Courant algebroid structures on $E \to M$ in two fold: the binary bracket is replaced by a $n$-ary bracket and the latter is a bracket on sections of $\wedge^{\geq 1}E$. Of course, the symbol goes along the bracket.

The paper is organized in the following way. In Section \ref{Preliminaries} we make a very brief summary of Roytenberg's graded Poisson bracket construction \cite{royContemp} and we recall the  Courant algebroid definition. Section \ref{Multi-Courant} is devoted to Keller-Waldmann Poisson algebra where we clarify and detail many aspects  that are not covered in \cite{Keller-Waldmann}. One of them is  the explicit formula for the Poisson bracket  $[\cdot,\cdot]_{_{K\!W}}$ on $\mathcal{C}(E)$, that is not given in \cite{Keller-Waldmann} because  the bracket
 is defined recursively there. To achieve this, we consider the binary case of a bracket introduced in \cite{Rotkiewcz}, built using the interior product of two elements of  $\mathcal{C}(E)$.  We introduce the concept of multi-Courant algebroid on $(E,\langle \cdot, \cdot \rangle)$ as a $n$-ary element $C \in \mathcal{C}(E)$ that is closed under the  bracket $[\cdot,\cdot]_{_{K\!W}}$.  We point out an alternative definition for the Keller-Waldmann Poisson algebra, already presented in \cite{Keller-Waldmann}, that is needed in the remaining sections of the paper. In this setting,  each $C \in \mathcal{C}(E)$ is in a one-to-one correspondence with $\reallywidetilde{C}$, the latter being obtained from $C$ and  $\langle \cdot, \cdot \rangle$. In Section~\ref{HMC section}, we extend the symmetric bilinear form $\langle \cdot, \cdot \rangle$ to $\Gamma(\wedge^{\bullet}E)$ and prove that it coincides with the restriction of $[\cdot,\cdot]_{_{K\!W}}$ to $\Gamma(\wedge^{\geq 1}E)$. Then, we define
  higher (pre-)multi-Courant structures on $(E,\langle \cdot, \cdot \rangle)$. These are multilinear maps from $\Gamma(\wedge^{\geq 1}E) \times \stackrel{(n)}{\ldots} \times \Gamma(\wedge^{\geq 1}E)$ to $\Gamma(\wedge^{\bullet}E)$ which are derivations in each entry,  together with a symbol that takes values on the space of derivations $Der(C^\infty(M),\Gamma(\wedge^{\bullet}E) )$. All these data should satisfy some compatible conditions involving $\langle \cdot, \cdot \rangle$. The extension by derivation in each entry of every $\reallywidetilde{C}$ is a  higher (pre-)multi-Courant structure on $E$.  We prove that higher (pre-)multi-Courant brackets form a graded Poisson algebra $\mathcal{C}(\wedge^{\geq1}E)$ of degree $-2$ which is isomorphic to $\reallywidetilde{\mathcal{C}}(E)$.

   In Section~\ref{CM isomorphism} we see how the higher Keller-Waldmann Poisson algebra $(\mathcal{C}(\wedge^{\geq1}E), \wedge, [\![\cdot, \cdot]\!])$  is related to Roytenberg's Poisson algebra $(\mathcal{F}_E, \cdot, \{\cdot, \cdot \})$. We start by establishing  a Poisson isomorphism  between the Keller-Waldmann Poisson algebra $(\mathcal{C}(E), \wedge, [\cdot,\cdot]_{_{K\!W}} )$ and  Roytenberg's Poisson algebra,  then  we show that the isomorphism introduced in \cite{Cueca-Mehta} gives rise to a Poisson isomorphism between the higher Keller-Waldmann algebra $(\mathcal{C}(\wedge^{\geq1}E), \wedge, [\![\cdot, \cdot]\!] )$ and Roytenberg's Poisson algebra. As a byproduct of the proof, we get a Poisson isomorphism between $\mathcal{F}_E$ and $\reallywidetilde{\mathcal{C}}(E)$.

 \

 \noindent{\bf Notation.} Let $\tau$ be a permutation of $n$ elements, $n \geq 1$; we denote by $\text{sgn}(\tau)$ the sign of $\tau$.  We denote by $Sh(i, n-i)$ the set of $(i, n-i)$-unshuffles, i.e., permutations $\tau$ that satisfy the inequalities
$\tau(1) < \ldots < \tau(i)$  and $\tau(i+1) < \ldots < \tau(n)$.
For a vector bundle $E \to M$, we denote by $\Gamma(\wedge^n E)$ the space of homogeneous $E$-multivectors of degree $n$ and we set $\displaystyle{\Gamma(\wedge^\bullet E):= \oplus_{n\geq 0}\Gamma(\wedge^n E)}$, with $\Gamma(\wedge^0 E)= C^\infty(M)$, and $\displaystyle{\Gamma(\wedge^{\geq 1} E):= \oplus_{n\geq 1}\Gamma(\wedge^n E)}$. For $n<0$, $\Gamma(\wedge^n E)=\{0\}$.

\

\section{Preliminaries} \label{Preliminaries}

\subsection{Graded Poisson bracket} \label{Graded Poisson bracket}

We briefly recall the construction of a graded Poisson algebra introduced in \cite{royContemp}. Let $E\to M$ be a vector bundle
equipped with a fibrewise non-degenerate symmetric bilinear form $\langle \cdot, \cdot \rangle$ and  denote by $E[m]$ the graded manifold obtained by shifting the fibre degree by $m$.
 Let $p^*(T^*[2]E[1])$ be the graded symplectic manifold
which is the pull-back of $T^*[2]E[1]$ by the map \mbox{$p:E[1] \to E[1]\oplus E^*[1]$} defined by $X \mapsto (X, \frac{1}{2}\langle X,.\rangle)$.
 We denote by $\mathcal{F}_E:= \oplus_{n\geq 0}\mathcal{F}_E^n$ the graded algebra of functions on $p^*(T^*[2]E[1])$, with $\mathcal{F}_E^0=C^\infty(M)$ and  $\mathcal{F}_E^1=\Gamma(E)$ and, consequently, $\Gamma(\wedge^n E)\subset \mathcal{F}_E^n$.
 The graded algebra $\mathcal{F}_E$ is equipped with
the canonical Poisson bracket $\{\cdot,\cdot\}$ of degree $-2$, determined by the graded symplectic structure, so that we have a graded Poisson algebra structure on $\mathcal{F}_E$.
The Poisson bracket of functions of degrees $0$ and $1$ is given by
$$\{f,g\}=0,\; \; \{f, e\}=0 \quad {\hbox{and}} \quad \{e,e'\}=\langle e,e' \rangle,$$
for all $e, e'\in \Gamma(E)$ and $f,g \in C^\infty(M)$.

\

\subsection{Courant structures} \label{subsection2.2} Recall that, given
  a vector bundle $E\to M$
equipped with a fibrewise non-degenerate symmetric bilinear form $\langle \cdot,\cdot \rangle$,
  a \emph{Courant structure} on $(E, \langle\cdot,\cdot\rangle)$ is a pair $(\rho, [\cdot,\cdot])$, where  $\rho:E\to TM$ is a morphism of vector bundles called the \emph{anchor},
 and $[\cdot,\cdot]$  is a $\mathbb{R}$-bilinear bracket on $\Gamma(E)$, called
the \emph{Dorfman bracket}, such that
\begin{equation} \label{pre_Courant1}
\rho(u)\cdot\langle v,w\rangle=\langle[u,v],w\rangle +  \langle v,[u,w]\rangle, \quad \quad \rho(u)\cdot \langle v,w\rangle=\langle u, [v,w]+ [w,v]\rangle,
\end{equation}
and
\begin{equation}\label{Leibniz_Courant}
[u,[v,w]] =[[u,v],w] + [v,[u,w]],
\end{equation}
for all $u,v,w \in \Gamma(E)$. The bracket $[\cdot,\cdot]$ equips the space $\Gamma(E)$ of sections of $E$ with a \emph{Leibniz algebra} structure.
Skipping Equation (\ref{Leibniz_Courant}) yields a \emph{pre-Courant structure} on $(E, \langle\cdot,\cdot\rangle)$.

There is a one-to-one  correspondence between pre-Courant structures $(\rho, [\cdot,\cdot])$ on  $(E, \langle \cdot, \cdot \rangle)$ and functions $\Theta \in \mathcal{F}_E^3$, while for Courant structures the function $\Theta$ is such that $\{\Theta, \Theta \}=0$ \cite{royContemp}. In this case, the hamiltonian vector field $X_{\Theta}=\{ \Theta, \cdot \}$ on the graded manifold $p^*(T^*[2]E[1])$ is a homological vector field, and so $\big(p^*(T^*[2]E[1]), X_{\Theta}\big)$ is a $Q$-manifold.

The anchor and Dorfman bracket associated to a given $\Theta\in \mathcal{F}_E^3$ can be defined, for all $e,e' \in \Gamma(E)$ and $f \in C^\infty(M)$, by the derived bracket expressions:
\begin{equation*}
  \rho(e)\cdot f=\{f,\{e,\Theta\}\} \quad {\hbox{and}} \quad [e,e']=\{e',\{e,\Theta\}\}.
\end{equation*}

\medskip

\section{Multi-Courant structures and Keller-Waldmann Poisson algebra} \label{Multi-Courant}

 In this section we deepen the study of the Keller-Waldmann Poisson algebra. We start by recalling the main definitions and results in \cite{Keller-Waldmann} and we give an explicit definition of the Poisson bracket $[\cdot, \cdot]_{_{K\!W}}$ in ${\mathcal{C}}(E)$, which is defined only recursively in \cite{Keller-Waldmann}. Then, we define a Poisson bracket on $\left(\reallywidetilde{\mathcal{C}}(E), \wedge \right)$ which is nothing but the Poisson bracket announced in \cite{Cueca-Mehta}.

\subsection{Multi-Courant structures} \label{section_4.2}
Let $E\to M$ be a vector bundle
equipped with a fibrewise non-degenerate symmetric bilinear form $\langle \cdot,\cdot \rangle$. The next definition is taken from \cite{Keller-Waldmann}, but within a geometrical perspective. $\mathfrak{X}(M)$ denotes the space of vector fields on a manifold $M$.

\begin{defn}\label{n-Courant}
A  \emph{$n$-ary pre-Courant structure} on $(E, \langle\cdot,\cdot\rangle)$ is a multilinear $n$-bracket on $\Gamma(E)$, $n\geq 0$,
$$C: \Gamma(E) \times \stackrel{(n)}{\ldots} \times \Gamma(E) \to \Gamma(E)$$ for which there exists a map $\sigma_C$, called the \emph{symbol} of $C$,
$$ \sigma_C :  \Gamma(E) \times \stackrel{(n-1)}{\ldots} \times \Gamma(E) \to \mathfrak{X}(M),$$
such that for all $e, e', e_1, \ldots, e_{n-1} \in \Gamma(E)$, we have
\begin{equation} \label{def_pre-Courant_1}
\sigma_C(e_1, \ldots, e _{n-1})\cdot \langle e, e' \rangle= \langle C(e_1, \ldots, e _{n-1},e), e' \rangle + \langle e, C(e_1, \ldots, e _{n-1}, e') \rangle
\end{equation}
and, for $n\geq 2$ and $1\leq i \leq n-1$, the following $n-1$ conditions hold:

\begin{align}\label{def_pre-Courant_2}
\langle C(e_1, \ldots, e_i, e_{i+1}, \ldots, e _{n})+ C(e_1, \ldots, e_{i+1}, e_i, \ldots, e _{n}), e \rangle &  \nonumber\\ =\sigma_C (e_1, \ldots,\widehat{e_{i}}, \widehat{e_{i+1}}, \ldots, e _{n}, e) \cdot \langle e_i, e_{i+1} \rangle,
\end{align}
 with $\widehat{e_i}$ meaning the absence of $e_i$.  A $0$-ary pre-Courant structure is simply an element $e \in \Gamma(E)$ (with vanishing symbol).
The triple $(E, \langle \cdot,\cdot \rangle, C)$ is called an \emph{$n$-ary pre-Courant algebroid}.  When we don't want to specify the arity of $C$, we call it a \emph{pre-multi-Courant} structure and the triple $(E, \langle \cdot,\cdot \rangle, C)$ is a \emph{pre-multi-Courant algebroid}.
\end{defn}

For $n=1$, $C$ is derivative endomorphism \cite{YKS-Mackenzie} with symbol $\sigma_C \in \mathfrak{X}(M)$.
When $n=2$, conditions (\ref{def_pre-Courant_1}) and (\ref{def_pre-Courant_2}) coincide with (\ref{pre_Courant1}) for
$\rho=\sigma_C$. So, as it would be expected, Definition~\ref{n-Courant} generalizes the notion of pre-Courant structure on $(E, \langle\cdot,\cdot\rangle)$.

We denote by $\mathcal{C}^{n+1}(E)$ the space of all $n$-ary pre-Courant structures on $E$ and set
$$\mathcal{C}(E)=\oplus_{n\geq 0}\mathcal{C}^n(E),$$
with $\mathcal{C}^0(E)=C^\infty(M)$ and $\mathcal{C}^1(E)=\Gamma(E)$.

\

\begin{rem}  \label{remark_3.2}
 The symbol $\sigma_C$ of $C\in \mathcal{C}^{n+1}(E)$ is uniquely determined by $C$ ~\cite{Keller-Waldmann}.  The uniqueness of $\sigma_C$ allows to consider an extension of $C$, also denoted by $C$, on the graded space $\Gamma(\wedge^{\leq 1}E)=C^\infty(M) \oplus \Gamma(E)$, where $f\in C^\infty(M)$ has degree $0$ and $e \in \Gamma(E)$ has degree $1$. The extension of $C$ is a degree $1-n$ bracket,
\begin{equation*}
C:  \Gamma(\wedge^{\leq 1}E) \times  \stackrel{(n)}{\ldots} \times   \Gamma(\wedge^{\leq 1}E) \to  \Gamma(\wedge^{\leq 1}E),
\end{equation*}
 with symbol
\begin{equation*}
\sigma_C :  \Gamma(\wedge^{\leq 1}E) \times \stackrel{(n-1)}{\ldots} \times \Gamma(\wedge^{\leq 1}E) \to \mathfrak{X}(M),
\end{equation*}
such that, for all $e_i \in \Gamma(E)$ and $f\in C^\infty(M)$,
\begin{equation} \label{C(f,g)}
 C(e_1, \ldots, e _{n-1},f)= \sigma_C(e_1, \ldots, e _{n-1})\cdot f
\end{equation}
and
$$ C(e_1, \ldots, \stackrel{\stackrel{i}{\downarrow}}{f}, \dots, e_{n-1})=C(e_1, \ldots,  \stackrel{\stackrel{j}{\downarrow}}{f}, \dots, e_{n-1}),$$
for all $1 \leq i,j \leq n$.

By degree reasons, $C$ vanishes when applied to at least two functions,
\begin{equation*}
C(e_1, \ldots, f, \ldots, g, \ldots, e_{n-2})=0.
\end{equation*}
Assuming that $\sigma_C$ vanishes when applied to at least one function,
$$\sigma_C(e_1, \ldots, f, \ldots, e_{n-2})=0, $$
 Equations (\ref{def_pre-Courant_1}) and (\ref{def_pre-Courant_2}), with the obvious adaptations, are satisfied.
\end{rem}

\subsection{Keller-Waldmann Poisson algebra} \label{KW algebra}

Given $C \in \mathcal{C}^{n+1}(E)$, $n\geq 1$, and $e \in \Gamma(E)$,  we denote by $\imath_eC$ the element of $\mathcal{C}^{n}(E)$ defined by
\begin{equation} \label{i_eC}
\imath_eC (e_1, \ldots, e_{n-1})=C(e,e_1, \ldots, e_{n-1}),
\end{equation}
for all $e_1, \ldots, e_{n-1} \in \Gamma(E)$, with symbol  given by
$$\sigma_{\imath_eC}(e_1, \ldots, e_{n-2})=\sigma_{C}(e,e_1, \ldots, e_{n-2}).$$
If $C=e_1$, $\imath_e e_1= \langle e, e_1 \rangle$ and  we set $\imath_e f :=0$.

If we consider the  extension of $C$ as in Remark~\ref{remark_3.2} we may define, for $f \in C^\infty(M)$,
\begin{equation*} \label{i_fC}
\imath_fC(e_1, \ldots, e_{n-1})=C(f, e_1, \ldots, e_{n-1})=\sigma_C(e_1, \ldots, e_{n-1})\cdot f,
\end{equation*}
for all $e_1, \ldots, e_{n-1} \in \Gamma(E)$.

\

The space $\mathcal{C}(E)$ is endowed with an associative graded commutative product $\wedge$ of degree zero \cite{Keller-Waldmann} defined as follows:
$$
\begin{cases}
&f \wedge g =fg=g\wedge f\\
& f \wedge e = f e= e \wedge f,
\end{cases}
$$
for all $f,g \in C^\infty(M)$ and $e \in \Gamma(E)$, and such that, for all $e \in \Gamma(E)$, $\imath_e$ is a derivation of $(\mathcal{C}(E), \wedge)$\footnote{Our signs are different from those in \cite{Keller-Waldmann} and coincide with \cite{A10} and \cite{Cueca-Mehta}.}:
\begin{equation*}  \label{derivation_i_e}
\imath_e(C_1 \wedge C_2)=\imath_e C_1 \wedge C_2 + (-1)^n C_1 \wedge \imath_e C_2,
\end{equation*}
for all $C_1 \in \mathcal{C}^n(E)$ and $C_2 \in \mathcal{C}(E)$. For $C_1 \in \mathcal{C}^n(E)$ and $C_2 \in \mathcal{C}^m(E)$, with $n,m \geq 1$,
$C_1 \wedge C_2$ is equivalently given by \cite{Keller-Waldmann}:
\begin{align}  \label{exterior_product_E}
 C_1 \wedge C_2&\left(e_{1},\ldots,e_{n+m-1}\right) = \nonumber \\ & \sum_{\tau \in Sh(n, m-1)}\text{sgn}(\tau)
  \left\langle C_1\left(e_{\tau(1)},\ldots,e_{\tau(n-1)}\right), e_{\tau(n)} \right\rangle  C_2\left(e_{\tau(n+1)},\ldots,e_{\tau(n+m-1)}\right) \nonumber  \\
  &+(-1)^{nm} \sum_{\tau \in Sh(m,n-1)}\text{sgn}(\tau)\nonumber \\ &\left\langle C_2 \left(e_{\tau(1)},\ldots,e_{\tau(m-1)}\right), e_{\tau(m)} \right\rangle
  C_1\!\left(e_{\tau(m+1)},\ldots,e_{\tau(n+m-1)}\right),
\end{align}
for all $e_{1},\ldots,e_{n+m-1} \in \Gamma(E)$.

  \

The symbol of $C_1 \wedge C_2$ is given by
\begin{align} \label{simbolo_produto}
 \sigma_{C_1 \wedge C_2}&\left(e_{1},\ldots,e_{n+m-2}\right)\cdot f = \nonumber \\ & \sum_{\tau \in Sh(n, m-2)}\text{sgn}(\tau)
  \left\langle C_1\left(e_{\tau(1)},\ldots,e_{\tau(n-1)}\right), e_{\tau(n)} \right\rangle  \sigma_{C_2}\!\left(e_{\tau(n+1)},\ldots,e_{\tau(n+m-2)}\right)\cdot f \nonumber \\
  &+\sum_{\tau \in Sh(n-2, m)}\text{sgn}(\tau)\nonumber \\
  &\Big( \sigma_{C_1}\!\left(e_{\tau(1)},\ldots,e_{\tau(n-2)}\right)\cdot f \Big)
  \left\langle C_2\left(e_{\tau(n-1)},\ldots,e_{\tau(n+m-3)}\right), e_{\tau(n+m-2)} \right\rangle,
\end{align}
for all $e_{1},\ldots,e_{n+m-2} \in \Gamma(E)$ and $f \in C^\infty(M)$.

\

\begin{rem} \label{expression_P}
A homogeneous
  $P \in \Gamma(\wedge^{p} E)$, $P=e_1 \wedge \ldots \wedge e_p$, with $e_i \in \Gamma(E)=\mathcal{C}^{1}(E)$,  can be seen as an element of  $\mathcal{C}^{p}(E)$. From the definition and properties of the interior product, we may obtain an explicit expression for $P(e'_{1}, \ldots , e'_{p-1})$, with $e'_{1}, \ldots , e'_{p-1} \in \Gamma(E)$, by means of products of type $\langle e'_i, e_j\rangle$ (see also Equations (\ref{bilinear_form_explicit}) and (\ref{P})). Furthermore, Equation~(\ref{simbolo_produto}) yields
    $\sigma_P =0$. Conversely, If $C$ is an element of $\mathcal{C}^n(E)$ with $\sigma_C=0$, then $C\in  \Gamma(\wedge^n E)$ (see Lemma~\ref{vanishing_symbol}). For $P, Q \in \Gamma(\wedge^{\bullet} E)$, $P \wedge Q$ is the usual exterior product.
  \end{rem}

\

\begin{defn} \cite{Keller-Waldmann} \label{def_KW_bracket}
The space $\mathcal{C}(E)$ is endowed with a graded Lie bracket of degree $-2$,
$$[\cdot, \cdot]_{_{K\!W}}: \mathcal{C}^n(E)\times \mathcal{C}^m(E)\to \mathcal{C}^{n+m-2}(E),$$
 uniquely defined, for all $f,g \in C^\infty(M)$, $e, e'\in \Gamma(E)$, $D \in \mathcal{C}^2(E)$, $C_1\in \mathcal{C}^n(E)$ and $C_2\in \mathcal{C}(E)$ by \footnote{ Our signs in (iv), (v)  and (\ref{i_e[C1,C2]}) are different from those in \cite{Keller-Waldmann} and coincide with \cite{A10}.},
\begin{enumerate}
\item $[f, g]_{_{K\!W}}=0$,
\item $ [f, e]_{_{K\!W}}= 0= [e, f]_{_{K\!W}}$,
\item $[e, e']_{_{K\!W}}=\langle e, e' \rangle$,
\item $[f,D]_{_{K\!W}}= \sigma_D\cdot f= - [D,f]_{_{K\!W}}$,
\item $[e,C_1]_{_{K\!W}}=(-1)^{n+1}[C_1,e]_{_{K\!W}}=\imath_e C_1$
\end{enumerate}
and, by recursion,
\begin{equation} \label{i_e[C1,C2]}
\imath_e[C_1, C_2]_{_{K\!W}}=[e,[C_1, C_2]_{_{K\!W}}]_{_{K\!W}}=[[e, C_1]_{_{K\!W}}, C_2]_{_{K\!W}} + (-1)^n[C_1, [e,C_2]_{_{K\!W}}]_{_{K\!W}}.
\end{equation}
\end{defn}

\

In \cite{Keller-Waldmann}, it is proved that
\begin{equation} \label{derivation_KW_bracket}
[C_1, C_2 \wedge C_3]_{_{K\!W}}= [C_1,C_2]_{_{K\!W}} \wedge C_3+ (-1)^{nm} C_2 \wedge [C_1,C_3]_{_{K\!W}},
\end{equation}
for all $C_1\in \mathcal{C}^n(E)$, $C_2\in \mathcal{C}^m(E)$ and $C_3 \in \mathcal{C}(E)$. Summing up, we have:

\begin{prop}\cite{Keller-Waldmann}  \label{graded_Poisson_algebra}
The triple $(\mathcal{C}(E), \wedge, [\cdot, \cdot]_{_{K\!W}} )$ is a graded Poisson algebra of degree $-2$, that we call Keller-Waldmann Poisson algebra.
\end{prop}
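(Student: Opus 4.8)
The plan is to verify separately the three defining properties of a graded Poisson algebra of degree $-2$: that $(\mathcal{C}(E),\wedge)$ is associative and graded commutative, that $[\cdot,\cdot]_{_{K\!W}}$ is a graded Lie bracket of degree $-2$, and that the bracket is a biderivation of $\wedge$. The first property is part of the construction of $\wedge$ recalled above, and the biderivation (Leibniz) rule is exactly the identity~(\ref{derivation_KW_bracket}), which I may assume. That the bracket has degree $-2$ is immediate from its signature $\mathcal{C}^n(E)\times\mathcal{C}^m(E)\to\mathcal{C}^{n+m-2}(E)$. Hence the work reduces to two points: (a) that the recursion~(\ref{i_e[C1,C2]}) together with the base clauses (i)--(v) of Definition~\ref{def_KW_bracket} defines a \emph{consistent} element of $\mathcal{C}^{n+m-2}(E)$; and (b) graded antisymmetry and the graded Jacobi identity for $[\cdot,\cdot]_{_{K\!W}}$.

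The backbone of both points is a probing principle: for $k\geq 1$, an element $X\in\mathcal{C}^k(E)$ is completely determined by the family $\{\imath_e X\}_{e\in\Gamma(E)}$. Indeed $\imath_e X(e_1,\ldots,e_{k-2})=X(e,e_1,\ldots,e_{k-2})$ recovers $X$ as a multibracket and $\sigma_{\imath_e X}=\sigma_X(e,\cdot)$ recovers its symbol, while for $k=1$ one uses $\imath_e X=\langle e,X\rangle$ and the non-degeneracy of $\langle\cdot,\cdot\rangle$. Thus, to prove an identity between elements of degree $\geq 1$ it suffices to verify it after applying every $\imath_e=[e,\cdot]_{_{K\!W}}$; only the degree-$0$ (function) case, which is invisible to interior products, must be handled separately. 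This is what makes an induction on total degree run.

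I would carry out (a) by induction on $n+m$: granting that $[C_1,C_2]_{_{K\!W}}$ is a legitimate pre-Courant structure in lower total degree, one reads off its value and its symbol from~(\ref{i_e[C1,C2]}) and checks that the symbol axioms~(\ref{def_pre-Courant_1})--(\ref{def_pre-Courant_2}) propagate. For (b) I would first establish graded antisymmetry $[C_1,C_2]_{_{K\!W}}=-(-1)^{(n-2)(m-2)}[C_2,C_1]_{_{K\!W}}$, again by induction on $n+m$: the cases $n,m\leq 2$ are clauses (i)--(v), and the inductive step follows by applying $\imath_e$ to both sides, expanding with~(\ref{i_e[C1,C2]}), and invoking antisymmetry in lower degree. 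With antisymmetry in hand, the graded Jacobi identity
\[
[C_1,[C_2,C_3]_{_{K\!W}}]_{_{K\!W}}=[[C_1,C_2]_{_{K\!W}},C_3]_{_{K\!W}}+(-1)^{(n_1-2)(n_2-2)}[C_2,[C_1,C_3]_{_{K\!W}}]_{_{K\!W}}
\]
is proved by induction on $n_1+n_2+n_3$. When one argument lies in $\Gamma(E)$, antisymmetry reduces the identity to the recursion~(\ref{i_e[C1,C2]}), which holds by definition; when an argument is a function, it follows directly from (i), (ii), (iv) and~(\ref{C(f,g)}). In the remaining range all three arguments have degree $\geq 2$, so both sides lie in degree $\geq 2\geq 1$; here I apply $\imath_e$ to the Jacobiator, use that $\imath_e=[e,\cdot]_{_{K\!W}}$ is a derivation of the bracket (that is,~(\ref{i_e[C1,C2]}) once more) to distribute it across the nested brackets, and thereby rewrite $\imath_e$ of the Jacobiator as a signed sum of Jacobiators of $\imath_e C_1,\imath_e C_2,\imath_e C_3$ against the other two arguments. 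Each such term has smaller total degree and vanishes by the inductive hypothesis, so $\imath_e$ of the Jacobiator vanishes for every $e$, whence the probing principle forces the Jacobiator itself to vanish.

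The main obstacle is not the inductive mechanism but its bookkeeping: checking consistency of the recursion in (a), and matching the signs produced by~(\ref{i_e[C1,C2]}) with the exponents $(-1)^{(n-2)(m-2)}$ dictated by the degree-$-2$ grading (equivalently, by shifting all degrees down by $2$) in (a)--(b). The low-degree anchor cases involving functions and symbols must be unwound by hand from~(\ref{def_pre-Courant_1}),~(\ref{C(f,g)}) and clauses (i)--(v), and it is precisely the graded Jacobi identity in full generality that is the crux. Once graded antisymmetry, graded Jacobi and the biderivation rule~(\ref{derivation_KW_bracket}) are in place, alongside the associative graded-commutative product, the three axioms of a graded Poisson algebra of degree $-2$ hold, which is the assertion.
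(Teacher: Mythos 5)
The paper offers no proof of this proposition: it is imported from \cite{Keller-Waldmann}, with the product $\wedge$, the recursive characterization of $[\cdot,\cdot]_{_{K\!W}}$ and the biderivation identity (\ref{derivation_KW_bracket}) all quoted from that reference. Your outline is therefore not diverging from an argument in the text but reconstructing the original one, and it does so along essentially the same lines as Keller--Waldmann: the bracket is pinned down by the base clauses (i)--(v) together with the recursion (\ref{i_e[C1,C2]}), and every identity is propagated upward in total degree via the probing principle that an element of $\mathcal{C}^{k}(E)$, $k\geq 1$, is determined by its interior products $\imath_e$ (using non-degeneracy of $\langle\cdot,\cdot\rangle$ when $k=1$, and treating the degree-$0$ case by hand). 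Your sign conventions are consistent with the degree $-2$ shift: clause (v) is exactly $-(-1)^{(1-2)(n-2)}$ and (\ref{i_e[C1,C2]}) is the $e$-instance of your graded Jacobi identity, so the inductive steps for antisymmetry and Jacobi run as you describe.

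The one place where you defer rather than discharge the work is your point (a), and it is the crux of the original proof. One must show that the family $e\mapsto \imath_e[C_1,C_2]_{_{K\!W}}$ prescribed by (\ref{i_e[C1,C2]}) actually assembles into a single element of $\mathcal{C}^{n+m-2}(E)$: that the resulting multibracket admits a symbol satisfying (\ref{def_pre-Courant_1}) and, crucially, the symmetry conditions (\ref{def_pre-Courant_2}) in its first two slots, which couple $\imath_e[C_1,C_2]_{_{K\!W}}$ evaluated at $e'$ with $\imath_{e'}[C_1,C_2]_{_{K\!W}}$ evaluated at $e$ and are not automatic from the recursion. Saying the axioms ``propagate'' names this task without performing it. Note that the present paper makes available an alternative that sidesteps the consistency question entirely: take the closed formulas (\ref{explicit_KW})--(\ref{novo_i_C2_C1}), with symbol as in Lemma~\ref{symbol_interior_product}, as the definition and verify the pre-Courant axioms and the Poisson identities directly on them.
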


From Remark~\ref{expression_P}, Definition~\ref{def_KW_bracket} and Equation (\ref{derivation_KW_bracket}), we have:

\begin{cor}  \label{symbol_(P,Q)}
For all $P \in \Gamma(\wedge^p E)$ and $Q \in \Gamma(\wedge^q E)$, $\sigma_{[P,Q]_{_{K\!W}}}=0$.
\end{cor}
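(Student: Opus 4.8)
The plan is to prove the stronger statement that $\Gamma(\wedge^\bullet E)$ is closed under the bracket, namely $[P,Q]_{_{K\!W}}\in\Gamma(\wedge^{p+q-2}E)$, and then to invoke Remark~\ref{expression_P} (equivalently Lemma~\ref{vanishing_symbol}), which says that every element of $\Gamma(\wedge^{\bullet}E)$ has vanishing symbol. In this way, once $[P,Q]_{_{K\!W}}$ is known to be again an $E$-multivector, its symbol is forced to vanish and the corollary follows at once.

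The mechanism is that $[\cdot,\cdot]_{_{K\!W}}$ is a biderivation of $(\mathcal{C}(E),\wedge)$: it is a derivation in the second argument by Equation~(\ref{derivation_KW_bracket}), and hence, by graded skew-symmetry, also a derivation in the first. Since $\Gamma(\wedge^{\bullet}E)$ is generated as a $\wedge$-subalgebra by $\mathcal{C}^0(E)=C^\infty(M)$ and $\mathcal{C}^1(E)=\Gamma(E)$, and since a biderivation is controlled by its action on generators, it suffices to check the base cases on generators. These are precisely items (i)--(iii) of Definition~\ref{def_KW_bracket}: $[f,g]_{_{K\!W}}=0$, $[f,e]_{_{K\!W}}=0=[e,f]_{_{K\!W}}$ and $[e,e']_{_{K\!W}}=\langle e,e'\rangle$, all of which lie in $C^\infty(M)\subset\Gamma(\wedge^{\bullet}E)$.

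Concretely, writing $P=e_1\wedge\cdots\wedge e_p$ and $Q=e'_1\wedge\cdots\wedge e'_q$ with $e_i,e'_j\in\Gamma(E)$, I would run an induction on $p+q$. In the inductive step I peel off one factor, say $Q=e\wedge Q'$ with $e\in\Gamma(E)$ and $Q'\in\Gamma(\wedge^{q-1}E)$, and apply Equation~(\ref{derivation_KW_bracket}) to obtain
\[
[P,Q]_{_{K\!W}} = [P,e]_{_{K\!W}}\wedge Q' + (-1)^{p}\, e\wedge[P,Q']_{_{K\!W}}.
\]
Here $[P,e]_{_{K\!W}}=(-1)^{p+1}\imath_e P$ by item (v) of Definition~\ref{def_KW_bracket}, which is the exterior-algebra interior product and therefore lies in $\Gamma(\wedge^{p-1}E)$ (see Remark~\ref{expression_P}), while $[P,Q']_{_{K\!W}}\in\Gamma(\wedge^{\bullet}E)$ by the induction hypothesis; brackets involving a function are dealt with in the same way by peeling from the first argument and using $[e,f]_{_{K\!W}}=0$. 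Because $\Gamma(\wedge^{\bullet}E)$ is a $\wedge$-subalgebra, both summands remain inside $\Gamma(\wedge^{\bullet}E)$, and hence so does $[P,Q]_{_{K\!W}}$.

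The only genuine care needed is in the low-arity base cases and the accompanying sign conventions: one uses the extension of a bracket to functions from Remark~\ref{remark_3.2} to evaluate $[\,\cdot\,,f]_{_{K\!W}}$, and one must confirm that the interior-product terms $\imath_e P$ carry no symbol, which is exactly where the characterization $\sigma_C=0\iff C\in\Gamma(\wedge^{\bullet}E)$ of Lemma~\ref{vanishing_symbol} enters. I do not expect any step to be a real obstacle; the entire content is the biderivation property~(\ref{derivation_KW_bracket}) combined with the fact that $\Gamma(\wedge^{\bullet}E)$ is closed under $\wedge$ and stable under interior products.
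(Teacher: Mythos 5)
Your proposal is correct and follows essentially the same route the paper intends: the corollary is stated as a consequence of Remark~\ref{expression_P}, Definition~\ref{def_KW_bracket} and the Leibniz rule~(\ref{derivation_KW_bracket}), i.e.\ one shows inductively that $[P,Q]_{_{K\!W}}$ is again an $E$-multivector and then uses that multivectors have vanishing symbol. The only nitpick is that the direction you actually need is $P\in\Gamma(\wedge^{\bullet}E)\Rightarrow\sigma_P=0$, which comes from Equation~(\ref{simbolo_produto}) as noted in Remark~\ref{expression_P}, not from Lemma~\ref{vanishing_symbol} (that is the converse); this does not affect the validity of the argument.
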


 \

Let $V$ be a vector space and set $\mathfrak{g}= V^{\otimes(n-1)}$, for a fixed $n \in \mathbb N$. We denote by $\mathfrak{L}^p$ the space of linear maps from $\mathfrak{g}^{\otimes p} \otimes V$ to $V$ and set $\mathfrak{L}=\oplus_{p\geq0} \mathfrak{L}^p$, with $\mathfrak{L}^0=\mathfrak{g}$. In \cite{Rotkiewcz} a bilinear bracket of degree zero on $\mathfrak{L}$,
$$[\cdot, \cdot]^{n\mathfrak{L}}:\mathfrak{L}^p \times \mathfrak{L}^q \to \mathfrak{L}^{p+q},$$ was introduced. We don't need its explicit definition which can be found in \cite{Rotkiewcz}. However, the important feature of $[\cdot, \cdot]^{n\mathfrak{L}}$ in the present work is that, since $[\cdot, \cdot ]_{_{K\!W}}$ is nothing but $-[\cdot, \cdot]^{2\mathfrak{L}}$, we may have an explicit expression for $[\cdot, \cdot ]_{_{K\!W}}$ that is not given in Definition~\ref{def_KW_bracket}, where the bracket is defined  recursively.

\

Given $C_1 \in  \mathcal{C}^n(E)$ and $C_2 \in  \mathcal{C}^m(E)$, $n,m\geq 1$, the definition in \cite{Rotkiewcz} yields

\begin{equation} \label{explicit_KW}
[C_1, C_2]_{_{K\!W}}= \imath_{C_1}C_2- (-1)^{nm} \imath_{C_2}C_1,
\end{equation}
with $\imath_{C_2}C_1 \in \mathcal{C}^{n+m-2}(E)$ defined, for all $e_1, \ldots, e_{n+m-3} \in \Gamma(E)$, as follows:
\begin{equation}\label{i_C2_C1}
\begin{aligned}
\imath_{C_2}C_1 (e_1, \ldots, e_{n+m-3})= \sum& {\rm{sgn}}(J,I)\,(-1)^t  \\
&C_1(e_{i_{1}}, \ldots, e_{i_{t}}, C_2(e_{j_{1}}, \ldots, e_{j_{m-1}}), e_{i_{t+1}}, \ldots, e_{i_{n-2}}),
\end{aligned}
\end{equation}
where the sum is over all shuffles $I=\{i_1 < \ldots < i_{n-2}\} \subset \{1, \ldots, n+m-3 \}=N$. The $j's$ and $t$ are defined by $\{j_1 < \ldots < j_{m-1} \}=N\backslash I$, $i_{t+1}=j_{m-1}+1$ or, in case $j_{m-1}=n+m-3$, $t:=n-2$. The pair $(J,I)$ denotes the permutation $(j_1, \ldots , j_{m-1}, i_1, \ldots , i_{n-2} )$ of $N$.
When $C_2=e \in  \mathcal{C}^1(E)=\Gamma(E)$,
$\imath_{e}C_1$ is given by Equation~(\ref{i_eC}).

\begin{lem} \label{interior_product}
The interior product $\imath_{C_2}C_1 \in \mathcal{C}^{n+m-2}(E)$ defined in Equation (\ref{i_C2_C1}) is equivalently given by
\begin{equation}\label{novo_i_C2_C1}
\begin{aligned}
&\imath_{C_2}C_1 (e_1, \ldots, e_{n+m-3}) = \ds{\sum_{k= m-1}^{n+m-3} \,\,  \sum_{\tau \in Sh(k-(m-1), m-2)} \, {\rm{sgn}}(\tau) (-1)^{mk}} \\
& C_1(e_{\tau(1)}, \ldots, e_{\tau(k-(m-1))}, C_2(e_{\tau(k-(m-2))}, \ldots, e_{\tau(k-1)}, e_{k} ), e_{k+1}, \ldots, e_{n+m-3}),
\end{aligned}
\end{equation}
 with $C_1 \in  \mathcal{C}^n(E)$ and $C_2 \in  \mathcal{C}^m(E)$, $m\geq 1$.
\end{lem}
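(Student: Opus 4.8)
The plan is to prove that the right-hand sides of (\ref{i_C2_C1}) and (\ref{novo_i_C2_C1}) agree \emph{term by term}, by exhibiting a sign-preserving bijection between the two index sets. I assume throughout that $m\geq 2$; for $m=1$ both sides reduce, by the convention recorded just before the statement, to $\imath_e C_1$ as given in (\ref{i_eC}), and there is nothing to prove.

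First I would re-organize the sum in (\ref{i_C2_C1}) according to the largest index fed into $C_2$, that is, set $k:=j_{m-1}=\max J$. As $J$ ranges over the $(m-1)$-subsets of $N=\{1,\ldots,n+m-3\}$, the integer $k$ ranges precisely over $\{m-1,\ldots,n+m-3\}$, matching the outer summation in (\ref{novo_i_C2_C1}). Fix $k$. Because $k=\max J$, all indices larger than $k$ belong to $I$, so $\{i_{t+1},\ldots,i_{n-2}\}=\{k+1,\ldots,n+m-3\}$; these fill the slots of $C_1$ to the right of the inserted $C_2$-term and reproduce the tail $e_{k+1},\ldots,e_{n+m-3}$ of (\ref{novo_i_C2_C1}). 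The leftover indices $\{1,\ldots,k-1\}$ split into $B:=\{j_1,\ldots,j_{m-2}\}$, the first $m-2$ arguments of $C_2$, and $A:=\{i_1,\ldots,i_t\}$, the arguments of $C_1$ preceding the $C_2$-term, with $t=k-m+1=|A|$ and $|B|=m-2$. Listing $A$ increasingly followed by $B$ increasingly defines a shuffle $\tau\in Sh(k-m+1,m-2)$ of $\{1,\ldots,k-1\}$, and the resulting assignment $(I,J)\mapsto(k,\tau)$ is a bijection onto the index set of (\ref{novo_i_C2_C1}) under which the arguments of $C_1$ and of $C_2$ coincide on the nose.

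The main work is then to match the signs, namely to verify $\text{sgn}(J,I)\,(-1)^t=(-1)^{mk}\,\text{sgn}(\tau)$. I would do this by inversion counting. Written out, the sequence $(J,I)=(j_1,\ldots,j_{m-2},k,i_1,\ldots,i_t,k+1,\ldots,n+m-3)$ has its only inversions between $k$ and the smaller entries $i_1,\ldots,i_t$ (contributing $t$) and between $B$ and $A$; indeed each of $A,B$ is increasing, the final block $\{k+1,\ldots,n+m-3\}$ is increasing and exceeds everything to its left, and $k$ precedes only smaller $i$'s while it is preceded only by smaller $j$'s. Hence $\text{sgn}(J,I)=(-1)^{t+X}$ with $X=\#\{(b,a)\in B\times A: b>a\}$. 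On the other hand $\text{sgn}(\tau)=(-1)^{Y}$ with $Y=\#\{(a,b)\in A\times B: a>b\}$, and since $X+Y=|A|\,|B|=t(m-2)$ one gets $\text{sgn}(J,I)=(-1)^{t(m-1)}\text{sgn}(\tau)$. Therefore $\text{sgn}(J,I)(-1)^t=(-1)^{tm}\text{sgn}(\tau)$, and as $tm=km-m(m-1)$ with $m(m-1)$ even, $(-1)^{tm}=(-1)^{mk}$, which is exactly the required sign.

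Summing the matched terms over all pairs $(I,J)$, equivalently over all $(k,\tau)$, gives the asserted equality of (\ref{i_C2_C1}) and (\ref{novo_i_C2_C1}). I expect the delicate step to be the inversion count behind $\text{sgn}(J,I)=(-1)^{t+X}$: one must check carefully that the block $\{k+1,\ldots,n+m-3\}$ creates no inversions and that the only cross-block inversions are the $A$--$B$ ones together with the $t$ inversions produced by $k$. The passage from $(-1)^{t(m-1)}$ and $(-1)^t$ to $(-1)^{mk}$ via the parity of $m(m-1)$ is then routine.
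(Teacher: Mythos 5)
Your proof is correct and follows essentially the same route as the paper's: both reindex the sum by $k=\max J$ together with a shuffle $\tau$ of $\{1,\dots,k-1\}$, observe that the arguments of $C_1$ and $C_2$ then match term by term, and verify the sign identity $\mathrm{sgn}(J,I)(-1)^t=(-1)^{mk}\mathrm{sgn}(\tau)$ via $t=k-m+1$ and the evenness of $m(m-1)$. The only (immaterial) difference is that you compute $\mathrm{sgn}(J,I)$ by counting inversions, whereas the paper obtains the same factor $(-1)^{t(m-1)}$ by transposing the block $(j_1,\dots,j_{m-1})$ past $(i_1,\dots,i_t)$.
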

\begin{proof}
  We need to prove that (\ref{i_C2_C1}) can be rewritten as  (\ref{novo_i_C2_C1}). Let us consider a permutation $(i_{1}, \ldots, i_{t},j_{1}, \ldots, j_{m-2}, j_{m-1}, i_{t+1}, \ldots, i_{n-2})$ of $N=\{1, \ldots, n+m-3 \}$, as in  (\ref{i_C2_C1}). It is easy to see that the last $n-t-2$ permuted indices, $( i_{t+1}, \ldots, i_{n-2})$, must coincide with the last $n-t-2$ elements of $N$: $(t+m, \ldots, n+m-3)$. Then,
  $$(i_{1}, \ldots, i_{t},j_{1}, \ldots, j_{m-1}, i_{t+1}, \ldots, i_{n-2})=(i_{1}, \ldots, i_{t},j_{1}, \ldots, j_{m-1}, t+m,\ldots, n+m-3).$$
  Moreover, the index $t$ in  (\ref{i_C2_C1}) can be equivalently defined
 by setting
  $$j_{m-1}=t+m-1.$$
  Then, permutations $(i_{1}, \ldots, i_{t},j_{1}, \ldots, j_{m-2}, j_{m-1}, i_{t+1}, \ldots, i_{n-2})$ considered in (\ref{i_C2_C1}) can be rewritten as permutations
    \begin{equation}\label{eq_aux_2}
      (i_{1}, \ldots, i_{t},j_{1}, \ldots, j_{m-2}, t+m-1, t+m, \ldots, n+m-3),
    \end{equation}
  where $t$ takes values from $0$ to $n-2$.\footnote{When $t=0$, we have the trivial permutation $(\underbrace{1,\ldots, m-1}_{j_1,\ldots,j_{m-1}},\underbrace{m, \ldots, n+m-3}_{i_1,\ldots,i_{n-2}})$.}
  In addition, if we define the index $k$ by setting $k:=t+m-1$, then  permutation (\ref{eq_aux_2}) corresponds to
  $$(\tau(1), \ldots, \tau(k-(m-1),\tau(k-(m-2)), \ldots, \tau(k-1), k, k+1, \ldots, n+m-3),$$
  where $\tau \in Sh(k-(m-1), m-2)$.

  Finally, we need to rewrite the sign in  (\ref{i_C2_C1}), using the unshuffle $\tau$:
  \begin{align*}
    \text{sgn}(J,I)\times(-1)^t &= \text{sgn}(j_1,\ldots,j_{m+1},i_1,\ldots,i_{n-2})\times(-1)^t \\
    &=\text{sgn}(\tau(k-(m-2)), \ldots, \tau(k-1), k,\tau(1), \ldots, \tau(k-(m-1),\\
    &\quad\quad\quad\quad\quad\quad k+1, \ldots, n+m-3)\times(-1)^{k-(m-1)}\\
    &=\text{sgn}\big((\tau(k-(m-2)), \ldots, \tau(k-1), k,\tau(1), \ldots, \tau(k-(m-1))\big)\\
    &\quad\quad\quad\quad\quad\quad \times(-1)^{k-(m-1)}\\
    &=(-1)^{(k-(m-1))(m-1)}\text{sgn}(\tau(1), \ldots, \tau(k-(m-1),\tau(k-(m-2)),\\
    &\quad\quad\quad\quad\quad\quad  \ldots, \tau(k-1), k)\times(-1)^{k-(m-1)}\\
    &=(-1)^{(k-(m-1))m}\text{sgn}(\tau)=(-1)^{km}\text{sgn}(\tau).
  \end{align*}

  Therefore, we can rewrite  (\ref{i_C2_C1}) as
  \begin{eqnarray*}
    &\imath_{C_2}C_1 (e_1, \ldots, e_{n+m-3}) = \ds{\sum_{k= m-1}^{n+m-3} \,\,  \sum_{\tau \in Sh(k-(m-1), m-2)} \, {\rm sgn}(\tau) (-1)^{km}} \\
    & C_1(e_{\tau(1)}, \ldots, e_{\tau(k-(m-1))}, C_2(e_{\tau(k-(m-2))}, \ldots, e_{\tau(k-1)}, e_{k} ), e_{k+1}, \ldots, e_{n+m-3}).\nonumber
  \end{eqnarray*}

\end{proof}

 Lemma~\ref{interior_product} together with Equation~(\ref{explicit_KW}), provide an explicit  definition of the bracket $[\cdot, \cdot]_{_{K\!W}}$. For example, if  $C_1\in  \mathcal{C}^n(E)$ and $C_2 \in  \mathcal{C}^m(E)$ with  $n=m=3$, we have:
 \begin{align*}
 [C_1, C_2]_{_{K\!W}}(e_1,e_2,e_3)=C_1(C_2(e_1,e_2), e_3)-C_1(e_1, C_2(e_2,e_3))+ C_1(e_2, C_2(e_1,e_3))\\
 +C_2(C_1(e_1,e_2), e_3)-C_2(e_1, C_1(e_2,e_3))+ C_2(e_2, C_1(e_1,e_3)),
 \end{align*}
 for all $e_1,e_2,e_3 \in \Gamma(E)$.

\

For the sake of completeness,  in the next lemma we give the explicit formula for the symbol of $\imath_{C_2}C_1$.
\begin{lem}  \label{symbol_interior_product}
Given $C_1 \in  \mathcal{C}^n(E)$ and $C_2 \in  \mathcal{C}^m(E)$, the symbol of $\imath_{C_2}C_1 \in \mathcal{C}^{n+m-2}(E)$ is given by
\begin{align*}\label{symbol_i_(C2)C1}
  \sigma_{\imath_{C_2}C_1}&\left(e_{1},\ldots,e_{n+m-4}\right)\cdot f =\sum_{k=m-1}^{n+m-4}\sum_{\tau \in Sh(k-(m-1), m-2)}(-1)^{m\left( k-(m-1)\right)}\text{sgn}(\tau)\\
  &\sigma_{C_1}\left(e_{\tau(1)},\ldots,e_{\tau(k-(m-1))},C_2\left(e_{\tau(k-(m-2))},\ldots,e_{\tau(k-1)}, e_k\right), e_{k+1}, \ldots, e_{n+m-4}\right)\cdot f\nonumber\\[4mm]
  &+\sum_{\tau \in Sh(n-2,m-2)}(-1)^{m(n-2)}\text{sgn}(\tau)\nonumber \\
  &\sigma_{C_1}\left(e_{\tau(1)},\ldots,e_{\tau(n-2)}\right)\cdot \left( \sigma_{C_2}\left(e_{\tau(n-1)},\ldots,e_{\tau(n+m-4)}\right)\cdot f\right),\nonumber
\end{align*}
for all $e_1, \ldots, e_{n+m-4} \in \Gamma(E)$ and $f\in C^\infty(M)$.
\end{lem}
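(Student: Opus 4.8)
The plan is to extract the symbol of $\imath_{C_2}C_1$ by feeding it a function, exactly as the symbol of any element of $\mathcal{C}^{n+m-2}(E)$ is recovered from its extension in Remark~\ref{remark_3.2}. Concretely, since $\imath_{C_2}C_1$ is $(n+m-3)$-ary, Equation~(\ref{C(f,g)}) gives
\begin{equation*}
\sigma_{\imath_{C_2}C_1}(e_{1}, \ldots, e_{n+m-4}) \cdot f = (\imath_{C_2}C_1)(e_{1}, \ldots, e_{n+m-4}, f),
\end{equation*}
so the whole computation reduces to evaluating the closed formula (\ref{novo_i_C2_C1}) of Lemma~\ref{interior_product} with its last entry set equal to $f$ and with $C_1, C_2$ read in their extended form.

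First I would note that in (\ref{novo_i_C2_C1}) the unshuffle $\tau$ never moves the top index $n+m-3$, so upon setting $e_{n+m-3}=f$ the function occupies one of only two positions, according to the value of $k$. When $m-1 \leq k \leq n+m-4$ the tail $e_{k+1}, \ldots, e_{n+m-3}$ contains $f$ as the last argument of the outer $C_1$; applying (\ref{C(f,g)}) to $C_1$ replaces that summand by $\sigma_{C_1}(\ldots)\cdot f$ and reproduces the first sum of the statement. When $k=n+m-3$ the tail is empty and $f=e_{k}$ is the last argument of the inner $C_2$; here $k-(m-1)=n-2$, so the relevant unshuffles are precisely those in $Sh(n-2,m-2)$, and applying (\ref{C(f,g)}) first to $C_2$ and then to $C_1$ gives $\sigma_{C_1}(\ldots)\cdot\big(\sigma_{C_2}(\ldots)\cdot f\big)$, reproducing the second sum.

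It then remains to match the coefficients. The sign carried by (\ref{novo_i_C2_C1}) is $\text{sgn}(\tau)(-1)^{mk}$, whereas the statement displays $(-1)^{m(k-(m-1))}$ in the first sum and $(-1)^{m(n-2)}$ in the second. Both identifications rest on the single parity fact that $m(m-1)$ is even, whence $(-1)^{m(m-1)}=1$ and therefore $(-1)^{mk}=(-1)^{m(k-(m-1))}$, and $(-1)^{mk}=(-1)^{m(n-2)}$ at $k=n+m-3$. This is the only nonroutine sign manipulation.

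The step I expect to be the main obstacle is the legitimacy of substituting $f$ directly into (\ref{novo_i_C2_C1}): one must know that evaluating the interior-product formula on the extended arguments really computes the extension of $\imath_{C_2}C_1$ to functions, i.e.\ that $\imath_{\cdot}\,\cdot$ is compatible with the extension of Remark~\ref{remark_3.2}. I would secure this through the uniqueness of the symbol: the two-sum expression produced above is visibly of derivation type, and checking that it satisfies the defining relation (\ref{def_pre-Courant_1}) for $\imath_{C_2}C_1$ identifies it unambiguously as $\sigma_{\imath_{C_2}C_1}$. Once this compatibility is in place, the remainder is the index bookkeeping and the parity fact already noted.
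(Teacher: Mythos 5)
The paper states this lemma without proof, so there is nothing to compare against on that side; your substitution computation (splitting the sum at $k\leq n+m-4$ versus $k=n+m-3$, and the parity identity $(-1)^{m(m-1)}=1$ for the signs) is correct and is surely the intended derivation of the displayed formula. The problem lies exactly in the step you yourself flag as the main obstacle, and your proposed fix does not work. The two-sum expression is \emph{not} of derivation type in $f$: the second sum is $\sigma_{C_1}(\cdots)\cdot\bigl(\sigma_{C_2}(\cdots)\cdot f\bigr)$, a composition of two derivations, hence a second-order operator. Consequently it cannot satisfy the defining relation (\ref{def_pre-Courant_1}), and if you carry out that check you find it genuinely fails: writing $B=(e_{\tau(1)},\ldots,e_{\tau(n-2)})$ and $D=(e_{\tau(n-1)},\ldots,e_{\tau(n+m-4)})$, one has
\begin{multline*}
\langle C_1(B,C_2(D,e)),e'\rangle+\langle e, C_1(B,C_2(D,e'))\rangle
=\sigma_{C_1}(B)\cdot\bigl(\sigma_{C_2}(D)\cdot\langle e,e'\rangle\bigr)\\
-\langle C_2(D,e),C_1(B,e')\rangle-\langle C_1(B,e),C_2(D,e')\rangle,
\end{multline*}
and the cross terms do not cancel after summing over $\tau$. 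The case $n=m=2$ makes this concrete: there $\imath_{C_2}C_1=C_1\circ C_2$ and the lemma's formula gives $\sigma_{C_1}\circ\sigma_{C_2}$, which is not a vector field, and $C_1\circ C_2$ is not a derivative endomorphism (only the antisymmetrized combination $[C_1,C_2]_{_{K\!W}}$ lands in $\mathcal{C}^{n+m-2}(E)$, the second-order parts cancelling in the commutator).

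So the uniqueness-of-the-symbol route is closed to you; the correct justification of the substitution is different and more direct. The ``symbol'' in this lemma must be read as $\imath_f\bigl(\imath_{C_2}C_1\bigr)(e_1,\ldots,e_{n+m-4})$, i.e.\ the value of the interior-product formula (\ref{novo_i_C2_C1}) on the extended arguments, where $C_1$ and $C_2$ are replaced by their canonical extensions from Remark~\ref{remark_3.2} (each of which \emph{is} legitimized by the uniqueness of $\sigma_{C_1}$ and $\sigma_{C_2}$ separately). With that reading your computation is the whole proof: the substitution is the definition of the left-hand side, not something to be reconciled with an independently defined vector-field symbol. You should therefore delete the final paragraph's appeal to (\ref{def_pre-Courant_1}) and replace it with the observation that only $\sigma_{C_1}$ and $\sigma_{C_2}$ need uniqueness, while the displayed operator is first order in $f$ only after antisymmetrization in $C_1,C_2$.
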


\

 \begin{defn} \label{n-ary pre}
 A  pre-multi-Courant structure  $C\in \mathcal{C}^n(E)$, $n \geq 2$, is a \emph{multi-Courant structure} if
$[C, C]_{_{K\!W}}=0.$ In this case, the triple $(E, \langle \cdot,\cdot \rangle, C)$ is called a \emph{multi-Courant algebroid}.
 \end{defn}
 For $n=3$, a multi-Courant structure is simply the usual Courant structure on $(E, \langle \cdot, \cdot \rangle)$.

 \begin{rem} \label{parity}
Since the bracket $[\cdot, \cdot]_{_{K\!W}}$ is graded skew-symmetric, given $C \in \mathcal{C}^{2k}(E)$, $k \geq 1$,  we always  have
$[C,C]_{_{K\!W}}=0.$
So, all $(2k-1)$-ary pre-Courant structures are $(2k-1)$-ary Courant structures.
\end{rem}

\

 Lemma~\ref{interior_product}, Definition~\ref{n-ary pre} and Remark~\ref{parity} yield the next proposition.

\begin{prop} \label{fecho KW_bracket}
 A  pre-multi-Courant structure $C\in \mathcal{C}^{n}(E)$, with $n$ odd, is a  multi-Courant structure if and only if
\begin{align*}
& \ds{\sum_{k= n-1}^{2n-3} \,\,  \sum_{\tau \in Sh(k-(n-1), n-2)} \, {\rm{sgn}}(\tau) (-1)^{nk}} \\
& C(e_{\tau(1)}, \ldots, e_{\tau(k-(n-1))}, C(e_{\tau(k-(n-2))}, \ldots, e_{\tau(k-1)}, e_{k} ), e_{k+1}, \ldots, e_{2n-3})=0,
\end{align*}
for all $e_i \in \Gamma(E)$, $1\leq i \leq 2n-3$.
\end{prop}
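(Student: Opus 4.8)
The plan is to combine the characterization of multi-Courant structures in Definition~\ref{n-ary pre} with the explicit formula for the Keller-Waldmann bracket obtained from Lemma~\ref{interior_product}. Since $C \in \mathcal{C}^n(E)$ with $n$ odd, we have $(-1)^{n\cdot n}=(-1)^{n^2}=-1$, so the general formula (\ref{explicit_KW}) for $[C_1,C_2]_{_{K\!W}}$ specializes, with $C_1=C_2=C$ and $m=n$, to $[C,C]_{_{K\!W}}=\imath_C C-(-1)^{n^2}\imath_C C=\imath_C C+\imath_C C=2\imath_C C$. Thus the condition $[C,C]_{_{K\!W}}=0$ is equivalent to $\imath_C C=0$, and the statement of Proposition~\ref{fecho KW_bracket} is precisely the equation $\imath_C C(e_1,\ldots,e_{2n-3})=0$ written out using the formula (\ref{novo_i_C2_C1}) of Lemma~\ref{interior_product} with $C_1=C_2=C$ and $m=n$.

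First I would invoke Definition~\ref{n-ary pre} to recall that $C$ is a multi-Courant structure if and only if $[C,C]_{_{K\!W}}=0$. Second, I would apply Equation~(\ref{explicit_KW}) with $C_1=C_2=C$; the factor $(-1)^{nm}=(-1)^{n^2}$ is $-1$ because $n$ is odd, which turns the difference of interior products into a sum, yielding $[C,C]_{_{K\!W}}=2\,\imath_C C$. Since we work over $\mathbb{R}$, this vanishes if and only if $\imath_C C=0$. Third, I would substitute $C_1=C_2=C$ and $m=n$ into the explicit expression (\ref{novo_i_C2_C1}) from Lemma~\ref{interior_product}: the summation range $k=m-1,\ldots,n+m-3$ becomes $k=n-1,\ldots,2n-3$, the unshuffles range over $Sh(k-(n-1),n-2)$, the sign becomes $(-1)^{mk}=(-1)^{nk}$, and the arguments of the inner and outer copies of $C$ take exactly the form displayed in the proposition. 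Setting this equal to zero gives the asserted identity.

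The role of Remark~\ref{parity} in the chain "Lemma~\ref{interior_product}, Definition~\ref{n-ary pre} and Remark~\ref{parity} yield the next proposition" is to explain why the restriction to odd $n$ is the only interesting case: for even $n$ the bracket $[C,C]_{_{K\!W}}$ vanishes automatically by graded skew-symmetry, so there is no condition to write down. Thus the proposition only needs to treat odd arity, and I would note this explicitly to justify that the parity hypothesis $n$ odd is exactly what makes $(-1)^{n^2}=-1$ and hence $[C,C]_{_{K\!W}}=2\imath_C C$.

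The main obstacle is purely bookkeeping: verifying that the specialization $C_1=C_2=C$, $m=n$ in (\ref{novo_i_C2_C1}) reproduces the displayed equation verbatim, in particular checking that the outer copy of $C$ receives the arguments $e_{\tau(1)},\ldots,e_{\tau(k-(n-1))}$ followed by the inner bracket and then $e_{k+1},\ldots,e_{2n-3}$, and that the inner copy receives $e_{\tau(k-(n-2))},\ldots,e_{\tau(k-1)},e_k$, with sign $(-1)^{nk}\,\mathrm{sgn}(\tau)$. Since this is a direct substitution into an already-proved formula, no genuine difficulty arises; the proof is essentially a one-line reduction ($[C,C]_{_{K\!W}}=2\imath_C C$) followed by transcription of Lemma~\ref{interior_product}.
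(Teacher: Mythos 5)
Your argument is correct and is exactly the route the paper intends: it states that Lemma~\ref{interior_product}, Definition~\ref{n-ary pre} and Remark~\ref{parity} "yield" the proposition, and your reduction $[C,C]_{_{K\!W}}=\imath_C C-(-1)^{n^2}\imath_C C=2\,\imath_C C$ for odd $n$, followed by substituting $C_1=C_2=C$, $m=n$ into Equation~(\ref{novo_i_C2_C1}), is precisely that chain made explicit. No gaps.
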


\

\begin{rem}  \label{rem_Filippov}
Let $C\in \mathcal{C}^{n+1}(E)$ be a $n$-ary pre-Courant structure on $(E, \langle \cdot, \cdot \rangle)$. If $C$ satisfies
the \emph{Filippov identity} \cite{Fillippov}:
\begin{equation} \label{Filippov}
C(e_1, \ldots, e _{n-1}, C(e'_1, \ldots, e'_{n}))=\sum_{i=1}^{n}C(e'_1, \ldots, e' _{i-1}, C(e_1, \ldots, e _{n-1},e'_i), e'_{i+1}, \ldots, e'_{n}),\end{equation}
for all $e_1, \ldots, e _{n-1}, e'_1, \ldots, e'_{n} \in \Gamma(E)$,
we say that $C$ is a \emph{$n$-Filippov Courant structure} on $E$
 \footnote{Equation~(\ref{Filippov}) means that $C(e_1, \ldots, e _{n-1},\cdot)$ is a derivation of $C$.}.
Notice that $n$-Filippov Courant structures are called $n$-Courant structures in \cite{Keller-Waldmann}.

 If $C$ is a $n$-Filippov Courant structure on $(E, \langle \cdot, \cdot \rangle)$, $\Gamma(E)$ is equipped with a $n$-Leibniz algebra structure. Thus, a $2$-Filippov Courant structure on $(E, \langle \cdot, \cdot \rangle)$ is the same as a Courant algebroid structure on $(E, \langle \cdot, \cdot \rangle)$. However, comparing Equation~(\ref{Filippov}) with the identity in Proposition~\ref{fecho KW_bracket}, we see that, for $n\geq 3$, $n$-Filippov algebroids and $n$-ary Courant algebroids are different structures.

  An interesting aspect of the bracket $[\cdot, \cdot]^{n\mathfrak{L}}$ introduced in \cite{Rotkiewcz}, is that it characterizes $n$-Leibniz brackets as  those which are closed with respect to it. Indeed, as it is proved in \cite{Rotkiewcz},
Equation (\ref{Filippov}) is equivalent to
$[C,C]^{n\mathfrak{L}}=0.$
\end{rem}

\subsection{An alternative definition} \label{alternative def}

There is an alternative definition of pre-multi-Courant structure on $(E, \langle \cdot, \cdot \rangle)$ that we shall use in the next sections.

Given $C \in  \mathcal{C}^{n}(E)$, $n \geq 1$, we may define a map
 $$ \reallywidetilde{C} :  \Gamma(E) \times \stackrel{(n)}{\ldots} \times \Gamma(E) \to C^\infty(M)$$
 by setting
\begin{equation} \label{def_tilde_C}
\reallywidetilde{C}(e_1, \ldots, e_{n}):= \langle C(e_1, \ldots, e_{n-1}), e_n \rangle
\end{equation}
and, for $C \in  \mathcal{C}^0(E)=C^\infty(M)$,  $\reallywidetilde{C}=C$. Notice that for ${C} \in \mathcal{C}^{1}(E)$, $\reallywidetilde{C} (e)= \langle C,e \rangle$, for all $e \in \Gamma(E)$.

As it is remarked in \cite{Keller-Waldmann}, Definition~\ref{n-Courant} can be reformulated using the maps $\reallywidetilde{C}$. In particular, $\reallywidetilde{C}$ is $C^\infty(M)$-linear in the last entry and Equations (\ref{def_pre-Courant_1}) and (\ref{def_pre-Courant_2}) are equivalent to
\begin{align}\label{def_pre-Courant_3}
\reallywidetilde{C}(e_1, \ldots, e_i, e_{i+1}, \ldots, e _{n})+ \reallywidetilde{C}(e_1, &\ldots, e_{i+1}, e_i, \ldots, e _{n}) \nonumber\\
 &=\sigma_C (e_1, \ldots,\widehat{e_{i}}, \widehat{e_{i+1}}, \ldots, e _{n}) \cdot \langle e_i, e_{i+1}\rangle.
\end{align}

Let $\reallywidetilde{\mathcal{C}}^n(E)$ be the collection of maps $\reallywidetilde{C}$ defined by (\ref{def_tilde_C}), and set $\reallywidetilde{\mathcal{C}}(E)=\oplus_{n\geq 0} \reallywidetilde{\mathcal{C}}^n(E)$. There is a degree zero product on $\reallywidetilde{\mathcal{C}}(E)$, that we also denote by $\wedge$:
\begin{equation} \label{tilde_wedge}
\reallywidetilde{C_1} \wedge \reallywidetilde{C_2}=\reallywidetilde{C_1 \wedge C_2},\end{equation}
for all $\reallywidetilde{C_1} \in \reallywidetilde{\mathcal{C}}^{m}(E)$ and  $\reallywidetilde{C_2} \in \reallywidetilde{\mathcal{C}}^{n}(E)$, $m,n \geq 0$. Explicitly,
\begin{equation}  \label{product_tilde_C}
\reallywidetilde{C_1} \wedge \reallywidetilde{C_2}(e_1, \ldots, e_{m+n})= \sum_{\tau \in Sh(m, n)}\text{sgn}\,(\tau)\,\reallywidetilde{C_1}(e_{\tau(1)},\ldots,e_{\tau(m)})\,\reallywidetilde{C_2}(e_{\tau(m+1)},\ldots,e_{\tau(m+n)}),
\end{equation}
for all $e_{1},\ldots,e_{m+n} \in \Gamma(E)$.
The map
$$\reallywidetilde{\cdot}: \mathcal{C}(E) \to \reallywidetilde{\mathcal{C}}(E), \quad C \in \mathcal{C}^n(E) \mapsto \reallywidetilde{C} \in \reallywidetilde{\mathcal{C}}^n(E),$$
is an isomorphism of graded commutative algebras \cite{Keller-Waldmann}.

We may define a degree $-2$ bracket on $\reallywidetilde{\mathcal{C}}(E)$, by setting
\begin{equation}  \label{bracket_tilde_C}
\left[\reallywidetilde{C_1}, \reallywidetilde{C_2}\right]_{_{\reallywidetilde{K\!W}}}:=\reallywidetilde{[C_1,C_2]_{_{K\!W}}}
\end{equation}
i.e., given $\reallywidetilde{C_1} \in \reallywidetilde{\mathcal{C}}^{m}(E)$ and  $\reallywidetilde{C_2} \in \reallywidetilde{\mathcal{C}}^{n}(E)$,
\begin{equation*}
\left[\reallywidetilde{C_1}, \reallywidetilde{C_2}\right]_{_{\reallywidetilde{K\!W}}}(e_1, \ldots, e_{m+n-2})= \langle [C_1,C_2]_{_{K\!W}}(e_1, \ldots, e_{m+n-3}),e_{m+n-2}  \rangle
 \end{equation*}
 for all $e_1, \ldots, e_{m+n-2} \in \Gamma(E)$. In Section \ref{CM isomorphism} we shall see that $[\cdot, \cdot]_{_{\reallywidetilde{K\!W}}}$ is the bracket referred in Remark 2.6 of \cite{Cueca-Mehta}.

By construction,
the map $$\reallywidetilde{\cdot}:(\mathcal{C}(E),\wedge, [\cdot, \cdot]_{_{K\!W}} ) \to (\reallywidetilde{\mathcal{C}}(E), \wedge, [\cdot, \cdot]_{_{\reallywidetilde{K\!W}}})$$
is an isomorphism of graded Poisson algebras.

\begin{rem} \label{C_or_tilde_C}
Given $C \in \mathcal{C}^n(E)$, due to (\ref{bracket_tilde_C}) and the non-degeneracy of $\langle \cdot, \cdot \rangle$, we have
$$[C, C]_{_{K\!W}}=0 \; \Leftrightarrow \; \left[\reallywidetilde{C}, \reallywidetilde{C}\right]_{_{\reallywidetilde{K\!W}}}=0$$
and therefore, Definition~\ref{n-ary pre} can be given using either  $\reallywidetilde{C}\in \reallywidetilde{\mathcal{C}}^n(E)$ or  $C \in \mathcal{C}^n(E)$.
\end{rem}

\

\

 \section{Higher multi-Courant structures and higher Keller-Waldmann Poisson algebra} \label{HMC section}
In this section we define higher multi-Courant algebroids, which is the main notion of the paper. Inspired by the generalization of the Lie bracket by the Schouten bracket, we
extend a  pre-multi-Courant structure  $\reallywidetilde{C}\in \reallywidetilde{\mathcal{C}}^n(E)$ on $(E, \langle\cdot,\cdot\rangle)$ to the space $\Gamma(\wedge^{\geq 1} E)$, asking the extension to be a derivation in each entry.

\subsection{Extension of the bilinear form}  \label{subsection_derivatives}
We start by extending the symmetric bilinear form $\langle\cdot,\cdot\rangle$ on $\Gamma(E)$ to
 $\Gamma(\wedge^\bullet E)$ as follows. Given two homogeneous elements $P \in \Gamma(\wedge^p E)$ and $Q \in \Gamma(\wedge^q E)$,  with $p, q \geq 1$, $\langle P,Q \rangle  \in \Gamma(\wedge^{p+q -2} E)$, i.e., $\langle \cdot,\cdot \rangle $ is a degree $-2$ operation. Moreover, $\langle \cdot,\cdot \rangle $ satisfies the following conditions:
\begin{enumerate}
 \item
 $\langle P,Q \rangle = -(-1)^{pq} \langle Q,P \rangle$;
 \item
 $ \langle f,R  \rangle= \langle R,f  \rangle=0$;
 \item
 $\langle P, \langle Q, R \rangle \rangle = \langle \langle P,Q \rangle, R \rangle + (-1)^{pq}\langle Q, \langle P, R \rangle \rangle$
 \end{enumerate}
and
 \begin{equation} \label{extension_bilinear_form}
 \langle P, Q \wedge R \rangle= \langle P,Q \rangle \wedge R +(-1)^{pq}Q \wedge \langle P,R  \rangle,
 \end{equation}
 for all
  $R \in \Gamma(\wedge^\bullet E)$ and $f \in C^\infty(M)$.
 Extending  by bilinearity,  $\langle\cdot,\cdot\rangle$ is defined in the whole $\Gamma(\wedge^\bullet E)$ and $(\Gamma(\wedge^\bullet E), \langle\cdot,\cdot\rangle )$ is a graded Lie algebra.
Note that $\langle\cdot,\cdot\rangle$ is $C^\infty(M)$-linear in both entries.

\

\begin{lem} \label{lem_extension_bilinear}
Let $P= e_1 \wedge \ldots \wedge e_{p} \in \Gamma (\wedge^{p}E)$ and  $Q= e'_1 \wedge \ldots \wedge e'_{q} \in \Gamma (\wedge^{q}E)$ be two homogeneous elements of $\Gamma(\wedge^{\geq 1} E)$. Then,
\begin{equation} \label{bilinear_form_explicit}
\langle P, Q \rangle= \sum_{s=1}^{q} \sum_{k=1}^{p}(-1)^{k-s+p+1}\langle e_k, e'_s \rangle \widehat{P^k} \wedge \widehat{Q^s},
\end{equation}
where $\widehat{P^k}= e_1 \wedge \ldots \wedge \widehat{e_k} \wedge \ldots \wedge e_{p} \in \Gamma (\wedge^{p-1}E)$ and
$\widehat{Q^s}=  e'_1 \wedge \ldots \wedge \widehat{e'_s} \wedge \ldots \wedge e'_{q} \in \Gamma (\wedge^{q -1}E)$.
\end{lem}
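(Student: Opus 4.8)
The plan is to observe that the graded skew-symmetry $\langle P,Q\rangle=-(-1)^{pq}\langle Q,P\rangle$ together with the derivation rule (\ref{extension_bilinear_form}) already pin down $\langle P,Q\rangle$ completely once its value on pairs of sections is fixed, since every homogeneous element of $\Gamma(\wedge^{\geq 1}E)$ is a wedge of degree-one elements; so it suffices to check that the right-hand side of (\ref{bilinear_form_explicit}) obeys the same recursion. Concretely, I first record a \emph{left} Leibniz rule: feeding (\ref{extension_bilinear_form}) into the skew-symmetry and moving the bracketed factor back to the front, the Koszul signs collapse to give, for $A\in\Gamma(\wedge^a E)$, $B\in\Gamma(\wedge^b E)$, $C\in\Gamma(\wedge^c E)$, the identity $\langle A\wedge B,C\rangle=(-1)^{bc}\langle A,C\rangle\wedge B+A\wedge\langle B,C\rangle$. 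With this in hand I run a two-stage induction: first on $p$ with the second argument a single section, then on $q$ in general.

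For the base stage I prove $\langle P,e'\rangle=\sum_{k=1}^{p}(-1)^{k+p}\langle e_k,e'\rangle\,\widehat{P^k}$ by induction on $p$; the case $p=1$ is the definition of $\langle\cdot,\cdot\rangle$ on $\Gamma(E)$. For the step I split $P=e_1\wedge P'$ with $P'=e_2\wedge\cdots\wedge e_p$ and apply the left Leibniz rule: the summand $(-1)^{p-1}\langle e_1,e'\rangle\,P'$ reproduces the $k=1$ term (since $(-1)^{p-1}=(-1)^{p+1}$ and $P'=\widehat{P^1}$), while $e_1\wedge\langle P',e'\rangle$, evaluated by the induction hypothesis and reindexed by $k=j+1$, yields $\sum_{k=2}^{p}(-1)^{k+p}\langle e_k,e'\rangle\,\widehat{P^k}$ after the sign shift $(-1)^{(k-1)+(p-1)}=(-1)^{k+p}$.

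The general case follows by induction on $q$, the case $q=1$ being the base stage (there $\widehat{Q^1}=1$ and the exponent $k-1+p+1$ reduces to $k+p$). For $q\geq 2$ I write $Q=Q'\wedge e'_q$ with $Q'=e'_1\wedge\cdots\wedge e'_{q-1}$ and apply (\ref{extension_bilinear_form}) to get $\langle P,Q\rangle=\langle P,Q'\rangle\wedge e'_q+(-1)^{p(q-1)}\,Q'\wedge\langle P,e'_q\rangle$. In the first term the induction hypothesis for $Q'$, combined with $\widehat{(Q')^s}\wedge e'_q=\widehat{Q^s}$ for $s\leq q-1$, delivers exactly the $s\leq q-1$ part of (\ref{bilinear_form_explicit}); in the second term I substitute the base-stage formula for $\langle P,e'_q\rangle$ and use $\widehat{Q^q}=Q'$. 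The one genuinely delicate point is the sign reconciliation of this last term: commuting $Q'$ (degree $q-1$) past $\widehat{P^k}$ (degree $p-1$) produces $(-1)^{(q-1)(p-1)}$, and one verifies $p(q-1)+(k+p)+(q-1)(p-1)\equiv k-q+p+1\pmod 2$, which is precisely the exponent $k-s+p+1$ at $s=q$. This sign bookkeeping, together with correctly deriving the left Leibniz rule, is the main obstacle; everything else is formal manipulation of wedge products.
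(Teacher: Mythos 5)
Your proof is correct. The paper in fact states Lemma \ref{lem_extension_bilinear} without any proof, evidently regarding it as a routine consequence of the defining properties of the extended pairing; your argument supplies exactly the expected derivation. The left Leibniz rule $\langle A\wedge B,C\rangle=(-1)^{bc}\langle A,C\rangle\wedge B+A\wedge\langle B,C\rangle$ follows correctly from graded skew-symmetry combined with (\ref{extension_bilinear_form}), your base-stage formula $\langle P,e'\rangle=\sum_{k=1}^{p}(-1)^{k+p}\langle e_k,e'\rangle\,\widehat{P^k}$ checks out, and the sign count $p(q-1)+(k+p)+(q-1)(p-1)\equiv k-q+p+1\pmod 2$ in the final step is right, so the induction on $q$ closes.
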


\

For  $P \in \Gamma(\wedge^{p} E)$, $P \in {\mathcal{C}}^p (E)$ and is given by
 \begin{equation}  \label{P}
P(e_1, \ldots, e_{p-1})=\langle e_{p-1}, \ldots, \langle e_2, \langle e_1, P \rangle \rangle \ldots \rangle,
\end{equation}
 while
 $\reallywidetilde{P} \in \reallywidetilde{\mathcal{C}}^p (E)$ is given by
\begin{equation*}  \label{tilde_P}
\reallywidetilde{P}(e_1, \ldots, e_{p})=\langle e_{p}, \ldots, \langle e_2, \langle e_1, P \rangle \rangle \ldots \rangle,
\end{equation*}
for all $e_1, \ldots e_p \in \Gamma(E)$. 

\begin{lem} \label{KW_bracket=bilinear_form}
 For $P, Q\in \Gamma(\wedge^{\geq 1}E)$,
 \begin{equation} \label{KW_bracket_multivectors}
 \langle P, Q \rangle=[P, Q]_{_{K\!W}}.
 \end{equation}
 \end{lem}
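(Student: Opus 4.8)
The plan is to observe that, once both sides are restricted to $\Gamma(\wedge^{\geq 1}E)$, the operations $\langle\cdot,\cdot\rangle$ and $[\cdot,\cdot]_{_{K\!W}}$ are graded skew-symmetric biderivations of the exterior product $\wedge$, both of degree $-2$, and that they already agree on the generators $\Gamma(E)=\mathcal{C}^1(E)$. Since every homogeneous element of $\Gamma(\wedge^{\geq1}E)$ is a $\wedge$-product of elements of $\Gamma(E)$, two such biderivations agreeing on generators must agree everywhere. Concretely, I would introduce the difference $B(P,Q):=\langle P,Q\rangle-[P,Q]_{_{K\!W}}$ and prove $B\equiv 0$ by induction on $\deg P+\deg Q$.

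First I would check that the two operations take values in the same space and are built from the same product. By construction $\langle P,Q\rangle\in\Gamma(\wedge^{p+q-2}E)$; for the Keller--Waldmann bracket, Corollary~\ref{symbol_(P,Q)} gives $\sigma_{[P,Q]_{_{K\!W}}}=0$, whence $[P,Q]_{_{K\!W}}\in\Gamma(\wedge^{p+q-2}E)$ by Lemma~\ref{vanishing_symbol}. Moreover, on $\Gamma(\wedge^\bullet E)$ the product $\wedge$ appearing in the Leibniz rules (\ref{extension_bilinear_form}) and (\ref{derivation_KW_bracket}) is in both cases the usual exterior product (Remark~\ref{expression_P}). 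Thus $B$ is a well-defined bilinear map $\Gamma(\wedge^{\geq1}E)\times\Gamma(\wedge^{\geq1}E)\to\Gamma(\wedge^\bullet E)$.

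Next I would record the structural identities shared by the two operations. They coincide on generators, $\langle e,e'\rangle=[e,e']_{_{K\!W}}$ by Definition~\ref{def_KW_bracket}(iii); they have the same graded skew-symmetry sign, since axiom (i) reads $\langle P,Q\rangle=-(-1)^{pq}\langle Q,P\rangle$ while the degree $-2$ bracket satisfies $[P,Q]_{_{K\!W}}=-(-1)^{pq}[Q,P]_{_{K\!W}}$ (using $(p-2)(q-2)\equiv pq\ \bmod 2$); and they obey the same Leibniz rule in the second argument, namely (\ref{extension_bilinear_form}) and (\ref{derivation_KW_bracket}). Subtracting, $B$ is graded skew-symmetric with the same sign and a derivation in the second slot, $B(P,Q_1\wedge Q_2)=B(P,Q_1)\wedge Q_2+(-1)^{pq_1}Q_1\wedge B(P,Q_2)$; combining skew-symmetry with this yields the first-slot rule $B(P_1\wedge P_2,Q)=(-1)^{p_2 q}B(P_1,Q)\wedge P_2+P_1\wedge B(P_2,Q)$.

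It then remains to run the induction. I would first show $B(e,Q)=0$ for all $e\in\Gamma(E)$ by induction on $\deg Q$: the base case $Q\in\Gamma(E)$ is the agreement on generators, and the step writes $Q=e'\wedge Q'$ and uses the second-slot rule to reduce to $B(e,Q')=0$. A second induction on $\deg P$, writing $P=e\wedge P'$ and applying the first-slot rule together with $B(e,\cdot)\equiv 0$, then gives $B(P,Q)=0$ in general, which is the claim. The only delicate point, and the one I expect to be the real obstacle, is the sign bookkeeping: matching the skew-symmetry conventions of the two brackets and verifying the first-slot Leibniz rule for $B$; once this is settled the inductive steps are immediate. (One could instead compare the explicit formula (\ref{bilinear_form_explicit}) with the evaluation of $[P,Q]_{_{K\!W}}$ through Lemma~\ref{interior_product} and (\ref{P}), but this direct route is considerably more cumbersome than the biderivation argument above.)
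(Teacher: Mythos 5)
Your proposal is correct and is essentially the paper's own argument: the paper proves the lemma by a single induction on $p+q$, writing the factor of degree $\geq 2$ as $\widehat{Q}\wedge e$ and using that $\langle\cdot,\cdot\rangle$ and $[\cdot,\cdot]_{_{K\!W}}$ obey the same Leibniz rule (\ref{extension_bilinear_form}), (\ref{derivation_KW_bracket}) and agree on $\Gamma(E)$. Your difference map $B$ and the nested inductions are just a more explicit packaging of the same reduction to generators, with the sign checks (which do work out, since $(p-2)(q-2)\equiv pq \bmod 2$) spelled out rather than left implicit in the paper's ``without loss of generality''.
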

 \begin{proof}
 Let us prove this result for homogeneous elements $P \in \Gamma(\wedge^p E)$ and $Q \in \Gamma(\wedge^q E)$. We shall use induction on $n=p+q$. For $n=2$, we know from of Definition~\ref{def_KW_bracket}~$(iii)$ that $\langle e, e' \rangle=[e, e']_{_{K\!W}}$, for all $e, e'\in \Gamma(E)$. Now, let us suppose that, for some $k\geq2$ and for all homogeneous elements $P, Q\in \Gamma(\wedge^{\geq 1}E)$, such that $p+q\leq k$, we have $\langle P, Q \rangle=[P, Q]_{_{K\!W}}$. Let us consider $P, Q\in \Gamma(\wedge^{\geq 1}E)$, such that $p+q=k+1$, we need to prove that $\langle P, Q \rangle=[P, Q]_{_{K\!W}}$. Since $p+q=k+1\geq3$, we can suppose, without loss of generality, that $q\geq 2$ and write $Q=\widehat{Q}\wedge e$, for some $e \in \Gamma(E)$. Then, using  (\ref{extension_bilinear_form}) and (\ref{derivation_KW_bracket}), we have
 \begin{align*}
   \langle P, Q \rangle&=\langle P, \widehat{Q}\wedge e \rangle  \\
   &=\langle P,\widehat{Q} \rangle \wedge e +(-1)^{p(q-1)}\widehat{Q} \wedge \langle P,e  \rangle\\
   &=[P, \widehat{Q}]_{_{K\!W}} \wedge e +(-1)^{p(q-1)}\widehat{Q} \wedge [P, e]_{_{K\!W}}\\
   &=[P, \widehat{Q}\wedge e]_{_{K\!W}}\\
   &=[P, Q]_{_{K\!W}}.
 \end{align*}
 \end{proof}

\subsection{Higher Multi-Courant algebroids}
Now we introduce the main notion of this section. By $Der(C^\infty(M),\Gamma(\wedge^{\bullet}E) )$ we denote the space of derivations of $C^\infty(M)$ with values in $\Gamma(\wedge^{\bullet}E)$.

\begin{defn} \label{def_deriv_higher_pre_Courant}
A \emph{higher pre-multi-Courant structure} on $(E, \langle\cdot,\cdot\rangle)$ is a multilinear map
 \begin{equation*}
 \mathfrak{C}: \Gamma(\wedge^{\geq 1}E) \times \stackrel{(n)}{\ldots} \times \Gamma(\wedge^{\geq 1}E) \to \Gamma(\wedge^{\bullet}E), \quad n\geq 0,
 \end{equation*}
of degree $-n$,
 for which  there exists a map $\sigma_\mathfrak{C}$, called the \emph{symbol} of $\mathfrak{C}$,
\begin{equation*}
    \sigma_\mathfrak{C} :  \Gamma(\wedge^{\geq 1}E) \times \stackrel{(n-2)}{\ldots} \times \Gamma(\wedge^{\geq 1}E) \longrightarrow Der(C^\infty(M),\Gamma(\wedge^{\bullet}E) ),
\end{equation*}
such that
$\mathfrak{C}$ is $C^\infty(M)$-linear in the last entry and the following conditions hold:
\begin{align}
    &\mathfrak{C}(P_1,  \ldots, P_{i}\wedge R,\ldots, P_n)= (-1)^{p_i(p_{i+1}+ \ldots + p_n)} P_i \wedge \mathfrak{C}(P_1, \ldots, R, \ldots,  P_{n}) \label{propertie0_higherC}\\
&\qquad\qquad\qquad\qquad\qquad\qquad+ (-1)^{r(p_{i}+ \ldots + p_n)} R \wedge \mathfrak{C}(P_1, \ldots, P_i, \ldots,  P_{n}),\nonumber
\end{align}
\begin{align} \label{symbol_higherC}
\sigma_\mathfrak{C}(P_1,  \ldots, P_{i}\wedge R,\ldots, P_{n-2})
&= (-1)^{p_i(p_{i+1}+ \ldots + p_{n-2})} P_i \wedge \sigma_\mathfrak{C}(P_1, \ldots, R, \ldots,  P_{n-2})\nonumber\\
&+ (-1)^{r(p_{i}+ \ldots + p_{n-2})} R \wedge \sigma_\mathfrak{C}(P_1, \ldots, P_i, \ldots,  P_{n-2}),
\end{align}
\begin{align}\label{propertie4_higherC}
\mathfrak{C}(P_1, \ldots, P_i,  e, e', P_{i+1} , \ldots,& \, P_{n-2})  +  \mathfrak{C}(P_1, \ldots, P_i,  e', e, P_{i+1} , \ldots, P_{n-2})=\\
&=\sigma_\mathfrak{C}(P_1, \ldots, P_i, P_{i+1} , \ldots, P_{n-2})\left(\langle  e, e' \rangle\right) ,\nonumber
 \end{align}
for all $e, e' \in \Gamma(E)$ and for all homogeneous $P_i \in \Gamma(\wedge^{p_i} E)$, and $R \in \Gamma(\wedge^r E)$, where $p_i\geq 1$, $r\geq 1$ and $1 \leq i \leq n$. For $n=0$, $\mathfrak{C} \in C^\infty(M)$.

The triple $(E, \langle \cdot, \cdot \rangle, \mathfrak{C})$ is called a \emph{higher $n$-ary pre-Courant algebroid} or a \emph{higher pre-multi-Courant algebroid}, if we don't want to specify the arity of $\mathfrak{C}$.
\end{defn}

Notice that, for $P_i \in \Gamma(\wedge^{p_i} E)$, $1 \leq i \leq n-2$, and $f \in C^\infty(M)$,
$$\sigma_\mathfrak{C}(P_1, \ldots, P_{n-2})(f) \in \Gamma(\wedge^{p_1 + \ldots +p_{n-2}-n+2}E).$$

\

\begin{lem}
  If the bilinear form $\langle \cdot,\cdot \rangle$ is full \footnote{The bilinear form is said to be \emph{full} if
$\langle \cdot,\cdot \rangle : \Gamma(E) \times \Gamma(E) \to C^\infty(M)$ is surjective. }, $\sigma_\mathfrak{C}$ is $C^\infty(M)$-linear in the last entry.
\end{lem}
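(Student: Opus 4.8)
The plan is to prove that, under the fullness hypothesis, the symbol $\sigma_\mathfrak{C}$ inherits $C^\infty(M)$-linearity in its last entry from the corresponding property of $\mathfrak{C}$. The starting observation is that the symbol is \emph{not} assumed $C^\infty(M)$-linear a priori; only $\mathfrak{C}$ is declared $C^\infty(M)$-linear in its last entry. So I would take Equation (\ref{propertie4_higherC}) as the bridge that transfers linearity from $\mathfrak{C}$ to $\sigma_\mathfrak{C}$, since that identity expresses the symbol (paired against $\langle e,e'\rangle$) in terms of $\mathfrak{C}$ evaluated on arguments that include two sections $e,e'$ in the last two slots.

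The key steps, in order, are as follows. First I would fix homogeneous $P_1,\dots,P_{n-3}\in\Gamma(\wedge^{\geq 1}E)$, a function $f\in C^\infty(M)$, and $e,e'\in\Gamma(E)$, and write down Equation (\ref{propertie4_higherC}) with $P_{n-2}$ replaced by $fP_{n-2}$ (or, since I only need the last entry, I would arrange the argument so the last symbol-slot carries the scalar $f$). Then I would compute $\sigma_\mathfrak{C}(P_1,\dots,f P_{n-2})(\langle e,e'\rangle)$ two ways: on one side using the (unknown) behaviour of $\sigma_\mathfrak{C}$ under scaling the last entry, and on the other side using (\ref{propertie4_higherC}) to rewrite it as a sum of two $\mathfrak{C}$-values whose last entry is $e'$ (respectively $e$), hence to which the assumed $C^\infty(M)$-linearity of $\mathfrak{C}$ in the last entry does \emph{not} directly apply — so instead I would scale the appropriate earlier argument and invoke the derivation identity (\ref{propertie0_higherC}) to pull $f$ out, collecting the terms. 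Comparing the two computations yields an equation of the form
\begin{equation*}
\Big(\sigma_\mathfrak{C}(P_1,\dots,f P_{n-2})-f\,\sigma_\mathfrak{C}(P_1,\dots,P_{n-2})\Big)\!\left(\langle e,e'\rangle\right)=0,
\end{equation*}
valid for all $e,e'\in\Gamma(E)$.

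To finish, I would invoke fullness: since $\langle\cdot,\cdot\rangle:\Gamma(E)\times\Gamma(E)\to C^\infty(M)$ is surjective, every $g\in C^\infty(M)$ arises as (a finite sum of terms of the form) $\langle e,e'\rangle$, so the bracketed derivation, which kills every such $\langle e,e'\rangle$, must kill all of $C^\infty(M)$; as it is a derivation this forces it to vanish identically. Hence $\sigma_\mathfrak{C}(P_1,\dots,f P_{n-2})=f\,\sigma_\mathfrak{C}(P_1,\dots,P_{n-2})$, which is the desired $C^\infty(M)$-linearity in the last entry.

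The main obstacle I anticipate is the bookkeeping in the middle step: getting the scalar $f$ to pass through (\ref{propertie4_higherC}) cleanly, since the $\mathfrak{C}$-side of that identity has $e,e'$ (not a scalar) in the final slots, so one cannot use $\mathfrak{C}$'s last-entry linearity verbatim. The correct manoeuvre is to place the scalar on the last \emph{symbol}-argument $P_{n-2}$ and track how the derivation rule (\ref{propertie0_higherC}) distributes $f$ between the $P_i\wedge(\cdot)$ and $(\cdot)\wedge P_i$ terms; because $f$ has degree $0$, all the sign prefactors $(-1)^{\cdots}$ in (\ref{propertie0_higherC}) and (\ref{symbol_higherC}) collapse to $1$, and the term in which $f$ is wedged on the outside contributes the $f\,\sigma_\mathfrak{C}$ piece while the complementary term vanishes or recombines. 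Once the signs are confirmed trivial, the comparison and the fullness argument are routine.
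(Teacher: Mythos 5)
Your overall strategy --- insert $fP_{n-2}$ into Equation~(\ref{propertie4_higherC}), pull $f$ through the $\mathfrak{C}$-side, and then use fullness to upgrade ``vanishes on every $\langle e,e'\rangle$'' to ``vanishes on all of $C^\infty(M)$'' --- is exactly the paper's (one-line) argument, and the fullness endgame is fine; indeed, since surjectivity means every $g\in C^\infty(M)$ literally equals some $\langle e,e'\rangle$, you do not even need the derivation property of $\sigma_\mathfrak{C}$ at that stage. The problem is the middle step, precisely the one you flag as the main obstacle. You assert that in Equation~(\ref{propertie4_higherC}) the two $\mathfrak{C}$-values carry $e'$ (respectively $e$) in their final slot, so that the assumed last-entry $C^\infty(M)$-linearity of $\mathfrak{C}$ cannot be used directly, and you propose instead to scale an earlier argument and pull $f$ out via the derivation identity (\ref{propertie0_higherC}). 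That is a misreading of (\ref{propertie4_higherC}): the pair $e,e'$ is inserted \emph{between} $P_i$ and $P_{i+1}$, so for $n\geq 3$ (the case $n=2$ being vacuous) one may choose $i\leq n-3$, whereupon the last argument of $\mathfrak{C}$ is $P_{n-2}$ --- the same multivector occupying the last slot of $\sigma_\mathfrak{C}$ --- and the assumed linearity applies verbatim, giving at once
\begin{equation*}
\sigma_\mathfrak{C}(P_1,\ldots,fP_{n-2})\left(\langle e,e'\rangle\right)=f\,\sigma_\mathfrak{C}(P_1,\ldots,P_{n-2})\left(\langle e,e'\rangle\right).
\end{equation*}

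Your proposed detour does not work as stated. Equation~(\ref{propertie0_higherC}) is a derivation rule for wedge factors $P_i$, $R$ of degree $\geq 1$ and says nothing about multiplying an argument by a function of degree $0$; and $\mathfrak{C}$ is \emph{not} $C^\infty(M)$-linear in its non-final entries. For instance, for the extension of $\reallywidetilde{C}$ associated with a Courant structure one has $\reallywidetilde{C}(e_1,fe_2,e_3)=f\,\reallywidetilde{C}(e_1,e_2,e_3)+\left(\sigma_C(e_1)\cdot f\right)\langle e_2,e_3\rangle$, so ``pulling $f$ out of an earlier slot'' generates extra symbol terms rather than the clean factor $f$ you need. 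Replace that manoeuvre by the choice $i\leq n-3$ above and the proof closes, in agreement with the paper's argument.
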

\begin{proof}
  It is a direct consequence of (\ref{propertie4_higherC}) and the fact that $\langle \cdot, \cdot \rangle$ is full and $\mathfrak{C}$ is $C^\infty(M)$-linear in the last entry.
\end{proof}

\

The space of higher $n$-ary pre-Courant structures on $E$ is denoted by $\mathcal{C}^{n}(\wedge^{\geq 1}E)$ and we set
$$\mathcal{C}(\wedge^{\geq 1}E)=\oplus_{n\geq 0}\mathcal{C}^n(\wedge^{\geq 1}E),$$
with  $\mathcal{C}^0(\wedge^{\geq 1}E):=C^\infty(M)$.

\

The alternative definition of pre-multi-Courant structure, introduced in \ref{alternative def}, allows us to construct an example of higher pre-multi-Courant structure.

Given $\reallywidetilde{C} \in \reallywidetilde{\mathcal{C}}^{n}(E)$, we denote by $\overline{\reallywidetilde{C}}$ its extension
by derivation in each entry,
i.e., $\overline{\reallywidetilde{C}}$ and $\reallywidetilde{C}$ coincide on sections of $E$ and, furthermore, $\overline{\reallywidetilde{C}}$ satisfies
 \begin{eqnarray}   \label{bar_C}
 \overline{\reallywidetilde{C}}(P_1,  \ldots, P_{i}\wedge e,\ldots, P_n)
&=& (-1)^{p_i(p_{i+1}+ \ldots + p_n)} P_i \wedge \overline{\reallywidetilde{C}}(P_1, \ldots, e, \ldots,  P_{n}) \nonumber \\ &&+ (-1)^{p_{i}+ \ldots + p_n} e \wedge  \overline{\reallywidetilde{C}}(P_1, \ldots, P_i, \ldots,  P_{n}),
  \end{eqnarray}
for all homogeneous $P_i \in \Gamma(\wedge^{p_i} E), \,p_i \geq 1, \, 1 \leq i \leq n$, and  $e \in \Gamma(E)$.  For $f \in \reallywidetilde{\mathcal{C}}^{0}(E)=C^\infty(M)$, we set $\bar{f}=f$.
 Moreover, we associate to $\overline{\reallywidetilde{C}}$ the map
\begin{equation*}
 \sigma_{\overline{\reallywidetilde{C}}} :  \Gamma(\wedge^{\geq 1}E) \times \stackrel{(n-2)}{\ldots} \times \Gamma(\wedge^{\geq 1}E) \to Der(C^\infty(M),\Gamma(\wedge^{\bullet}E) ),\; \, n\geq2,
 \end{equation*}
 that coincides with $\sigma_C$ on sections of $E$ and
   is the extension by derivation in each entry of $\sigma_C$,  i.e., for all $f \in C^\infty(M)$,
    \begin{eqnarray}   \label{symbol_bar_C}
  \sigma_{\overline{\reallywidetilde{C}}}(P_1, \ldots, P_i \wedge e, \ldots, P_{n-2})\,(f)
&=& (-1)^{p_i(p_{i+1}+ \ldots + p_{n-2})} P_i \wedge \sigma_{\overline{\reallywidetilde{C}}}(P_1, \ldots, e, \ldots,  P_{n-2})\,(f) \nonumber \\ &&+ (-1)^{p_{i}+ \ldots + p_{n-2}} e \wedge  \sigma_{\overline{\reallywidetilde{C}}}(P_1, \ldots, P_i, \ldots,  P_{n-2})\,(f).
  \end{eqnarray}

\begin{lem}
For $\reallywidetilde{C} \in \reallywidetilde{\mathcal{C}}^{n}(E)$, $\overline{\reallywidetilde{C}}$ defined by Equation~(\ref{bar_C}) is an element of $\mathcal{C}^{n}(\wedge^{\geq 1}E)$, with symbol given by Equation~(\ref{symbol_bar_C}).
\end{lem}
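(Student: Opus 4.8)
The plan is to verify that $\overline{\reallywidetilde{C}}$, as defined by its action on sections of $E$ together with the derivation rule (\ref{bar_C}), satisfies all the axioms of Definition~\ref{def_deriv_higher_pre_Courant}, namely (\ref{propertie0_higherC}), (\ref{symbol_higherC}) and (\ref{propertie4_higherC}), with symbol $\sigma_{\overline{\reallywidetilde{C}}}$ given by (\ref{symbol_bar_C}). The conceptual point is that $\overline{\reallywidetilde{C}}$ and $\sigma_{\overline{\reallywidetilde{C}}}$ are \emph{defined} to be derivations in each entry, extending $\reallywidetilde{C}$ and $\sigma_C$ from $\Gamma(E)$; one must first confirm that such extensions exist and are well-defined, and then that the derivation rule (\ref{bar_C}) is consistent with the multilinearity and degree requirements.

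\medskip

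\noindent First I would address \emph{well-definedness}. Since any homogeneous $P \in \Gamma(\wedge^p E)$ is a sum of decomposables $e_1 \wedge \ldots \wedge e_p$, the derivation rule (\ref{bar_C}) forces the value of $\overline{\reallywidetilde{C}}$ on $P$ in terms of its values on sections; one must check this is independent of the way $P$ is written, i.e.\ compatible with the antisymmetry $e \wedge e' = - e' \wedge e$. This is the standard argument that a derivation extends uniquely from generators, but here it must be carried out entry by entry, and the only subtlety is that $\reallywidetilde{C}$ is \emph{not} skew-symmetric in its inputs: its failure to be skew is governed by the symbol via (\ref{def_pre-Courant_3}). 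I expect this to be the main obstacle, because consistency of the extension in the first $n-1$ entries (which need not be $C^\infty(M)$-linear) relies precisely on (\ref{def_pre-Courant_3}) to absorb the symbol terms that arise when commuting two section-arguments. Concretely, applying (\ref{bar_C}) to introduce a wedge factor and then swapping an adjacent pair of sections produces a discrepancy that must match the $\sigma_C$-term, and this is exactly the content of (\ref{propertie4_higherC}); so verifying well-definedness and verifying (\ref{propertie4_higherC}) are really two sides of the same computation.

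\medskip

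\noindent Next I would check the degree claim: if $P_i \in \Gamma(\wedge^{p_i}E)$ then $\overline{\reallywidetilde{C}}(P_1, \ldots, P_n) \in \Gamma(\wedge^{p_1 + \ldots + p_n - n}E)$, so that $\overline{\reallywidetilde{C}}$ has degree $-n$; this follows by a direct count from (\ref{bar_C}), since each application lowers the total wedge-degree by one relative to inserting the section, and $\reallywidetilde{C}$ sends $n$ sections to $\Gamma(\wedge^0 E) = C^\infty(M)$. The sign bookkeeping in (\ref{bar_C}) and (\ref{symbol_bar_C}) is designed so that, upon iterating, one recovers exactly the Koszul signs appearing in (\ref{propertie0_higherC}) and (\ref{symbol_higherC}) for a general wedge factor $R = e'_1 \wedge \ldots \wedge e'_r$ in place of a single section $e$. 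I would establish (\ref{propertie0_higherC}) by induction on $r = \deg R$: the base case $r=1$ is (\ref{bar_C}) itself, and the inductive step writes $R = \widehat{R} \wedge e'_r$, applies the derivation rule twice, and collects signs, the sign $(-1)^{r(p_i + \ldots + p_n)}$ emerging from the $r$-fold commutation of $e'_r$ past the preceding factors. The identity (\ref{symbol_higherC}) for $\sigma_{\overline{\reallywidetilde{C}}}$ is entirely analogous using (\ref{symbol_bar_C}).

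\medskip

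\noindent Finally, for (\ref{propertie4_higherC}) I would reduce to the case where all arguments are sections by using (\ref{propertie0_higherC}) to pull the wedge factors out of $P_1, \ldots, P_{n-2}$, so that the symmetrization in the two section-entries $e, e'$ is reduced to the section-level identity (\ref{def_pre-Courant_3}) satisfied by $\reallywidetilde{C}$ with symbol $\sigma_C$; the wedge-prefactors and their signs then match on both sides of (\ref{propertie4_higherC}) by construction of $\sigma_{\overline{\reallywidetilde{C}}}$. The $C^\infty(M)$-linearity in the last entry is immediate from the corresponding property of $\reallywidetilde{C}$ together with $C^\infty(M)$-linearity of $\langle\cdot,\cdot\rangle$ and of the wedge product. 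Assembling these verifications shows $\overline{\reallywidetilde{C}} \in \mathcal{C}^n(\wedge^{\geq 1}E)$ with symbol $\sigma_{\overline{\reallywidetilde{C}}}$, as claimed.
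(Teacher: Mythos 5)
Your verification of the three axioms follows the same route as the paper's own proof: you obtain (\ref{propertie0_higherC}) and (\ref{symbol_higherC}) by iterating the rules (\ref{bar_C}) and (\ref{symbol_bar_C}), and you prove (\ref{propertie4_higherC}) by stripping wedge factors off the $P_j$'s until all arguments are sections, where the identity reduces to (\ref{def_pre-Courant_3}); the paper runs exactly this induction, on the total degree $p_1+\cdots+p_{n-2}$. That part is sound and matches.

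The gap is in the well-definedness step, which you rightly single out as the main obstacle but then resolve by the wrong mechanism. Compatibility of (\ref{bar_C}) with the antisymmetry $e\wedge e'=-e'\wedge e$ inside a single slot is automatic: the two terms of (\ref{bar_C}) exchange roles with exactly the compensating Koszul sign, and no symbol term arises there. What is genuinely delicate is compatibility with the $C^\infty(M)$-module structure of each slot, i.e.\ that the recursion returns the same value on $(fu)\wedge v$ and on $u\wedge(fv)$. Setting $A(w)=\overline{\reallywidetilde{C}}(\ldots,fw,\ldots)-f\,\overline{\reallywidetilde{C}}(\ldots,w,\ldots)$ for a section $w$ placed in a non-final slot, the two computations agree precisely when $u\wedge A(v)+v\wedge A(u)=0$ for all sections $u,v$; but $A$ is exactly the failure of $C^\infty(M)$-linearity of $\reallywidetilde{C}$ in that slot, which by (\ref{def_pre-Courant_3}) is a sum of terms of the form $\bigl(\sigma_C(\ldots)\cdot f\bigr)\langle w,e_j\rangle$ and does not vanish when $\sigma_C\neq 0$. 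The identity (\ref{propertie4_higherC}) that you invoke governs the swap of two arguments sitting in \emph{different} slots of $\mathfrak{C}$, not of two factors inside one wedge-slot, so it is not the identity that absorbs these terms; your claim that well-definedness and (\ref{propertie4_higherC}) are ``two sides of the same computation'' conflates these two operations. Your sketch therefore leaves unproved the very step you flag as critical. For comparison, the paper's proof does not address well-definedness at all --- it takes the existence of the derivation extension for granted and devotes its argument entirely to (\ref{propertie4_higherC}) --- so you have located a real issue that the paper is silent on, but you have not closed it.
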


\begin{proof}
Applying repeatedly (\ref{bar_C}) (resp. (\ref{symbol_bar_C})), we obtain (\ref{propertie0_higherC}) (resp.  (\ref{symbol_higherC})). Also, it is immediate that $\overline{\reallywidetilde{C}}$ is $C^\infty(M)$-linear in the last entry.

 It remains to prove that, for all $e, e' \in \Gamma(E)$ and for all homogeneous $P_i \in \Gamma(\wedge^{p_i} E)$, $p_i\geq 1$, we have
\begin{align}\label{eq_aux_3}
\overline{\reallywidetilde{C}}(P_1, \ldots, P_i,  e, e', P_{i+1} , \ldots,& P_{n-2})  +  \overline{\reallywidetilde{C}}(P_1, \ldots, P_i,  e', e, P_{i+1} , \ldots, P_{n-2})=\\
&=\sigma_{\overline{\reallywidetilde{C}}}(P_1, \ldots, P_i,  P_{i+1} , \ldots, P_{n-2})\left(\langle  e, e' \rangle\right) .\nonumber
 \end{align}
Let us prove this by induction on $p_1+\ldots+p_{n-2}$. \\
When $p_1+\ldots+p_{n-2}= n-2$, then $p_i=1$, for $i=1,\ldots, n-2$ and (\ref{eq_aux_3}) reduces to (\ref{def_pre-Courant_3}), which is satisfied by $\overline{\reallywidetilde{C}}$ and $\sigma_{\overline{\reallywidetilde{C}}}$. Now, suppose that (\ref{eq_aux_3}) is satisfied for all $P_1, \ldots, P_{n-2}$ such that $n-2\leq p_1+\ldots+p_{n-2}\leq k$, for some $k\geq n-2$, and let us prove it for $P_1, \ldots, P_{n-2}$ such that $p_1+\ldots+p_{n-2}=k+1$. Because $k+1\geq n-1$, there is at least one $j\in\{1,\ldots,n-2\}$ such that $p_j\geq 2$ and then we can write $P_j=\widehat{P_j}\wedge u$, with $u \in \Gamma(E)$. Then,
\begin{align*}
  &\overline{\reallywidetilde{C}}(P_1, \ldots, \widehat{P_j}\wedge u,\ldots, e, e', \ldots, P_{n-2})  +  \overline{\reallywidetilde{C}}(P_1, \ldots, \widehat{P_j}\wedge u,\ldots, e', e, \ldots, P_{n-2})=\\
&\qquad\qquad\qquad=(-1)^{(p_j-1)(p_{j+1}+ \ldots + p_{n-2})}\, \widehat{P_j} \wedge \overline{\reallywidetilde{C}}(P_1, \ldots,u,\ldots, e, e', \ldots, P_{n-2})\\
&\qquad\qquad\qquad\ \  + (-1)^{(p_j-1+p_{j+1} \ldots + p_{n-2})}\, u \wedge \overline{\reallywidetilde{C}}(P_1, \ldots, \widehat{P_j},\ldots, e, e', \ldots, P_{n-2})\\
&\qquad\qquad\qquad\ \  + (-1)^{(p_j-1)(p_{j+1}+ \ldots + p_{n-2})}\, \widehat{P_j} \wedge \overline{\reallywidetilde{C}}(P_1, \ldots,u,\ldots, e', e, \ldots, P_{n-2})\\
&\qquad\qquad\qquad\ \  + (-1)^{(p_j-1+p_{j+1} \ldots + p_{n-2})}\, u \wedge \overline{\reallywidetilde{C}}(P_1, \ldots, \widehat{P_j},\ldots, e', e, \ldots, P_{n-2})\\
&\qquad\qquad\qquad=(-1)^{(p_j-1)(p_{j+1}+ \ldots + p_{n-2})}\, \widehat{P_j} \wedge \bigg(\overline{\reallywidetilde{C}}(P_1, \ldots,u,\ldots, e, e', \ldots, P_{n-2})+\bigg.\\
&\qquad\qquad\qquad\qquad\qquad\qquad\qquad\qquad\ \  +\bigg. \overline{\reallywidetilde{C}}(P_1, \ldots,u,\ldots, e', e, \ldots, P_{n-2})\bigg)\\
&\qquad\qquad\qquad\ \  + (-1)^{(p_j-1+p_{j+1} \ldots + p_{n-2})}\, u \wedge \bigg(\overline{\reallywidetilde{C}}(P_1, \ldots, \widehat{P_j},\ldots, e, e', \ldots, P_{n-2})\bigg.\\
&\qquad\qquad\qquad\qquad\qquad\qquad\qquad\qquad\ \  +\bigg. \overline{\reallywidetilde{C}}(P_1, \ldots, \widehat{P_j},\ldots, e', e, \ldots, P_{n-2})\bigg)\\
&\qquad\qquad\qquad=(-1)^{(p_j-1)(p_{j+1}+ \ldots + p_{n-2})}\, \widehat{P_j} \wedge \sigma_{\overline{\reallywidetilde{C}}}(P_1, \ldots,u,\ldots, \widehat{e}, \widehat{e'}, \ldots, P_{n-2})\left(\langle  e, e' \rangle\right)\\
&\qquad\qquad\qquad\ \  + (-1)^{(p_j-1+p_{j+1} \ldots + p_{n-2})}\, u \wedge \sigma_{\overline{\reallywidetilde{C}}}(P_1, \ldots, \widehat{P_j},\ldots, \widehat{e}, \widehat{e'}, \ldots, P_{n-2})\left(\langle  e, e' \rangle\right) \\
&\qquad\qquad\qquad=\sigma_{\overline{\reallywidetilde{C}}}(P_1, \ldots, \widehat{P_j}\wedge u,\ldots, \widehat{e}, \widehat{e'}, \ldots, P_{n-2})\left(\langle  e, e' \rangle\right).
\end{align*}
\end{proof}

\

Next proposition establishes a relation between $\reallywidetilde{\mathcal{C}}(E)$ and $\mathcal{C}(\wedge^{\geq 1}E)$.

\begin{prop}  \label{1_1}
There is a one-to-one correspondence between
$\reallywidetilde{\mathcal{C}}(E)$ and $\mathcal{C}(\wedge^{\geq 1}E)$
such that,
 for all $n \geq 1$,
\begin{eqnarray*}
\overline{\cdot}: \reallywidetilde{\mathcal{C}}^n(E)& \to & \mathcal{C}^n(\wedge^{\geq 1}E)\\
\reallywidetilde{C} & \mapsto & \overline{\reallywidetilde{C}} ,
\end{eqnarray*}
with $\overline{\reallywidetilde{C}}$ given by Equation~(\ref{bar_C}). For $n=0$, $\overline{\cdot}$ is the identity map.
\end{prop}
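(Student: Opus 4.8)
The plan is to show that for each $n \geq 1$ the map $\overline{\cdot}$ is a well-defined bijection $\reallywidetilde{\mathcal{C}}^n(E) \to \mathcal{C}^n(\wedge^{\geq 1}E)$, the case $n=0$ being trivial as $\overline{\cdot}$ is the identity on $C^\infty(M)$. Well-definedness is already settled by the previous lemma, which guarantees $\overline{\reallywidetilde{C}} \in \mathcal{C}^n(\wedge^{\geq 1}E)$ for every $\reallywidetilde{C} \in \reallywidetilde{\mathcal{C}}^n(E)$, with symbol $\sigma_{\overline{\reallywidetilde{C}}}$ given by (\ref{symbol_bar_C}). Injectivity is immediate: by construction $\overline{\reallywidetilde{C}}$ restricts to $\reallywidetilde{C}$ on $\Gamma(E) \times \stackrel{(n)}{\ldots} \times \Gamma(E)$, so two extensions that coincide already agree on sections of $E$, hence are equal.

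The substance of the proof is surjectivity. Given $\mathfrak{C} \in \mathcal{C}^n(\wedge^{\geq 1}E)$, I would first observe that, since $\mathfrak{C}$ has degree $-n$, evaluating it on $n$ sections $e_1, \ldots, e_n \in \Gamma(E) = \Gamma(\wedge^1 E)$ lands in $\Gamma(\wedge^{n-n}E) = C^\infty(M)$. Thus the restriction $\reallywidetilde{C} := \mathfrak{C}|_{\Gamma(E) \times \ldots \times \Gamma(E)}$ is a candidate element of $\reallywidetilde{\mathcal{C}}^n(E)$. To confirm $\reallywidetilde{C} \in \reallywidetilde{\mathcal{C}}^n(E)$, note that it inherits $C^\infty(M)$-linearity in the last entry from $\mathfrak{C}$, and that evaluating $\sigma_\mathfrak{C}$ on $n-2$ sections lands in $Der(C^\infty(M), \Gamma(\wedge^0 E)) = \mathfrak{X}(M)$, providing the required symbol; restricting condition (\ref{propertie4_higherC}) to all-section arguments then reproduces exactly the defining relation (\ref{def_pre-Courant_3}).

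It remains to prove that $\overline{\reallywidetilde{C}} = \mathfrak{C}$, which I would do by comparing the two maps through the derivation rule they share. Specializing $R = e \in \Gamma(E)$ (so $r=1$) in the defining property (\ref{propertie0_higherC}) of $\mathfrak{C}$, the exponent $r(p_i + \ldots + p_n)$ collapses to $p_i + \ldots + p_n$, yielding precisely the recursion (\ref{bar_C}) that characterizes $\overline{\reallywidetilde{C}}$. Hence the difference $\mathfrak{C} - \overline{\reallywidetilde{C}}$ obeys the same derivation identity in every slot and vanishes on all-section arguments. Since the operators are local, and locally every homogeneous element of $\Gamma(\wedge^{p}E)$ is a finite sum of decomposables $e_1 \wedge \ldots \wedge e_p$ (functions being absorbed into the section factors), an induction on the total degree $p_1 + \ldots + p_n$, peeling one section off a slot with $p_j \geq 2$ via (\ref{bar_C}) and using multilinearity, shows $\mathfrak{C} - \overline{\reallywidetilde{C}} = 0$; the base case $p_1 = \ldots = p_n = 1$ holds by construction of $\reallywidetilde{C}$. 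The same induction, now run with (\ref{symbol_higherC}) and (\ref{symbol_bar_C}), gives $\sigma_{\overline{\reallywidetilde{C}}} = \sigma_\mathfrak{C}$, so the full data match and $\overline{\cdot}$ is onto.

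I expect the main obstacle to be organizational rather than conceptual: arranging the induction so that the reduction to decomposable, then all-section, arguments is unambiguous, verifying that the sign conventions in (\ref{propertie0_higherC}) genuinely collapse to those in (\ref{bar_C}) when the peeled factor is a section, and justifying the passage from a general multivector to a sum of decomposables (via locality and a local frame, or partitions of unity) in a way compatible with the multilinearity invoked. None of these present a genuine difficulty, since both $\mathfrak{C}$ and $\overline{\reallywidetilde{C}}$ are multilinear and local, and their difference is governed entirely by the shared derivation rule.
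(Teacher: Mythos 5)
Your proposal is correct and follows essentially the same route as the paper: restrict $\mathfrak{C}$ to sections of $E$ to get the inverse map, and observe that the extension of the restriction recovers $\mathfrak{C}$. The only difference is that you spell out (via the specialization $R=e$ in (\ref{propertie0_higherC}) and an induction on total degree) the step the paper dismisses as obvious, namely $\overline{\mathfrak{C}|_{\Gamma(E)}}=\mathfrak{C}$.
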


\begin{proof}
Given $\mathfrak{C} \in \mathcal{C}^n(\wedge^{\geq 1}E)$, its restriction to $\Gamma(E)$ satisfies (\ref{def_pre-Courant_3})
 so that  $\mathfrak{C}|_{\Gamma(E)} \in \reallywidetilde{\mathcal{C}}^n(E)$. It is obvious that $\overline{\mathfrak{C}|_{\Gamma(E)}}=\mathfrak{C}$.

Now, if $\overline{\reallywidetilde{C}}_1=\overline{\reallywidetilde{C}}_2 \in \mathcal{C}^n(\wedge^{\geq 1}E)$, obviously $\overline{\reallywidetilde{C}}_1|_{\Gamma(E)}=\overline{\reallywidetilde{C}}_2|_{\Gamma(E)}$, which means $\reallywidetilde{C}_1=\reallywidetilde{C}_2$.
\end{proof}

\

Having the one-to-one correspondence given by Proposition~\ref{1_1}, and if there is no ambiguity, in the sequel we shall write very often $\overline{\reallywidetilde{C}}$ instead of $\mathfrak{C}$.

\begin{rem} \label{Remark_4_7}
Let us explain why we consider $\overline{\reallywidetilde{C}}$, the extension of $\reallywidetilde{C} \in \reallywidetilde{\mathcal{C}}(E)$ by derivation in each argument, instead of $\overline{C}$, the extension of $C \in \mathcal{C}(E)$ by derivation in each argument. The reason comes from what should be the extension by derivation of Equation (\ref{def_pre-Courant_2}) in Definition \ref{n-Courant}. The corresponding condition that $\overline{C}$ should satisfy is
\begin{multline*}
  \langle \overline{C}(P_1, \ldots, e, e', \ldots, P _{n-3}) + \overline{C}(P_1, \ldots, e', e, \ldots, P_{n-3}), P_{n-2} \rangle \\
  =\sigma_{\overline{C}} (P_1, \ldots,\widehat{e_{i}}, \widehat{e_{i+1}}, \ldots, P _{n-3}, P_{n-2})( \langle e, e' \rangle),
\end{multline*}
for all $e, e' \in \Gamma(E)$ and for all homogeneous $P_i \in \Gamma(\wedge^{p_i} E)$. But in this expression, the right hand side is derivative with respect to each argument $P_i, i=1,\ldots, n-3$ while the left hand side is not. On the contrary, Equation (\ref{propertie4_higherC}) is fully derivative on both sides.
\end{rem}

\

\subsection{Higher Keller-Waldmann Poisson algebra}

The space $\mathcal{C}(\wedge^{\geq 1}E)$ is endowed with an associative graded commutative product  of degree zero, that we denote by $\wedge$ \footnote{Although we use the same notation, this product is not the one defined in $\mathcal{C}(E)$.},
defined as follows.  Given $\overline{\reallywidetilde{C}}_1 \in \mathcal{C}^r(\wedge^{\geq 1}E)$ and $\overline{\reallywidetilde{C}}_2 \in \mathcal{C}^s(\wedge^{\geq 1}E)$, 
 set
\begin{equation} \label{product_til_bar}
\overline{\reallywidetilde{C}}_1 \wedge \overline{\reallywidetilde{C}}_2:= \overline{\reallywidetilde{C}_1 \wedge \reallywidetilde{C}_2},
\end{equation}
where the product $\wedge$ on the right-hand side is the one defined by Equation~(\ref{product_tilde_C}). Using Equation~(\ref{tilde_wedge}), we may write
$$\overline{\reallywidetilde{C}}_1 \wedge \overline{\reallywidetilde{C}}_2=\overline{\reallywidetilde{C_1 \wedge C_2}}.$$

\

The space $\mathcal{C}(\wedge^{\geq 1}E)$ is endowed with the following  bracket of degree $-2$,
\begin{eqnarray*}
[\![\cdot, \cdot]\!]: \mathcal{C}^r(\wedge^{\geq 1} E) \times \mathcal{C}^s(\wedge^{\geq 1}E)&  \to & \mathcal{C}^{r+s-2}(\wedge^{\geq 1} E)\\
(\overline{\reallywidetilde{C}}_1,\overline{\reallywidetilde{C}}_2)& \mapsto &  \left[\!\!\left[\,\overline{\reallywidetilde{C}}_1,\overline{\reallywidetilde{C}}_2\, \right]\!\!\right]:=\overline{\left[{\reallywidetilde{C}}_1, {\reallywidetilde{C}}_2\right]_{_{\reallywidetilde{K\!W}}}}.
\end{eqnarray*}

As a consequence of Equation (\ref{bracket_tilde_C}) and Lemma \ref{KW_bracket=bilinear_form}, we have:

 \begin{lem}
 For $P, Q\in \Gamma(\wedge^{\geq 1}E)$, $$\left[\!\!\left[\,\overline{\reallywidetilde{P}}, \overline{\reallywidetilde{Q}}\, \right]\!\!\right]=\overline{\reallywidetilde{[P, Q]_{_{K\!W}}}}= \overline{\reallywidetilde{\langle P, Q \rangle}}.$$
 \end{lem}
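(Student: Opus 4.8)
The statement to prove chains three equalities, so the plan is to establish each link separately and then compose them. The leftmost object is the higher bracket $[\![\,\overline{\reallywidetilde{P}},\overline{\reallywidetilde{Q}}\,]\!]$; the rightmost is $\overline{\reallywidetilde{\langle P,Q\rangle}}$, the extension by derivation of the map associated to the degree $-2$ bilinear pairing of $P$ and $Q$.

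First I would unwind the definition of $[\![\cdot,\cdot]\!]$. By its defining formula, $[\![\,\overline{\reallywidetilde{P}},\overline{\reallywidetilde{Q}}\,]\!]=\overline{[\reallywidetilde{P},\reallywidetilde{Q}]_{_{\reallywidetilde{K\!W}}}}$. The tilde-KW bracket was defined in Equation~(\ref{bracket_tilde_C}) by $[\reallywidetilde{P},\reallywidetilde{Q}]_{_{\reallywidetilde{K\!W}}}=\reallywidetilde{[P,Q]_{_{K\!W}}}$. Substituting this gives immediately
\begin{equation*}
\left[\!\!\left[\,\overline{\reallywidetilde{P}},\overline{\reallywidetilde{Q}}\,\right]\!\!\right]=\overline{\reallywidetilde{[P,Q]_{_{K\!W}}}},
\end{equation*}
which is the first claimed equality. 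This step is purely a matter of chasing the nested definitions of the three brackets and requires no computation.

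For the second equality, I would invoke Lemma~\ref{KW_bracket=bilinear_form}, which asserts that for $P,Q\in\Gamma(\wedge^{\geq 1}E)$ one has $[P,Q]_{_{K\!W}}=\langle P,Q\rangle$ as elements of $\Gamma(\wedge^{\bullet}E)$. Since $P$ and $Q$ lie in $\Gamma(\wedge^{\geq 1}E)$, this lemma applies directly. The operations $\reallywidetilde{\cdot}$ and $\overline{\cdot}$ are well-defined functions of their argument, so equal arguments produce equal outputs: from $[P,Q]_{_{K\!W}}=\langle P,Q\rangle$ we conclude $\reallywidetilde{[P,Q]_{_{K\!W}}}=\reallywidetilde{\langle P,Q\rangle}$ and hence $\overline{\reallywidetilde{[P,Q]_{_{K\!W}}}}=\overline{\reallywidetilde{\langle P,Q\rangle}}$, giving the final term.

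I do not anticipate a genuine obstacle here, since both halves rest on results already established earlier in the paper. The one point deserving care is the observation that $\langle P,Q\rangle$ is again an element of $\Gamma(\wedge^{\geq 1}E)$ (or $C^\infty(M)$), so that the symbols $\reallywidetilde{\cdot}$ and $\overline{\cdot}$ may legitimately be applied to it; this is ensured because $\langle\cdot,\cdot\rangle$ is the degree $-2$ operation mapping $\Gamma(\wedge^p E)\times\Gamma(\wedge^q E)$ into $\Gamma(\wedge^{p+q-2}E)$, and the chained equalities of Lemma~\ref{KW_bracket=bilinear_form} already take values in this space. Thus the whole proof is a short substitution argument combining Equation~(\ref{bracket_tilde_C}), Equation~(\ref{product_til_bar})'s companion definition of $[\![\cdot,\cdot]\!]$, and Lemma~\ref{KW_bracket=bilinear_form}, and it can be written in two or three lines.
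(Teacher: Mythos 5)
Your argument is correct and is exactly the paper's own reasoning: the paper states this lemma as an immediate consequence of the definition of $[\![\cdot,\cdot]\!]$ together with Equation~(\ref{bracket_tilde_C}) and Lemma~\ref{KW_bracket=bilinear_form}, which is precisely the substitution chain you carry out. Your added remark that $\langle P,Q\rangle$ lands in $\Gamma(\wedge^{\bullet}E)$ so the maps $\reallywidetilde{\cdot}$ and $\overline{\cdot}$ can be applied is a sensible (if minor) point of care that the paper leaves implicit.
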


\

\begin{thm}
The triple $(\mathcal{C}(\wedge^{\geq 1}E), \wedge, [\![\cdot, \cdot]\!] )$ is a graded Poisson algebra of degree $-2$ which is isomorphic to $\left(\reallywidetilde{\mathcal{C}}(E), \wedge, [\cdot, \cdot]_{_{\reallywidetilde{K\!W}}} \right)$.
\end{thm}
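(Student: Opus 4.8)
The plan is to obtain everything by transport of structure along the correspondence $\overline{\cdot}$ of Proposition~\ref{1_1}, since the product $\wedge$ and the bracket $[\![\cdot, \cdot]\!]$ on $\mathcal{C}(\wedge^{\geq 1}E)$ were defined precisely so as to make $\overline{\cdot}$ a morphism. First I would extend $\overline{\cdot}$ by $\mathbb{R}$-linearity and note that Proposition~\ref{1_1} then gives a degree-preserving linear bijection $\overline{\cdot}\colon \reallywidetilde{\mathcal{C}}(E) \longrightarrow \mathcal{C}(\wedge^{\geq 1}E)$, equal to the identity in degree $0$. Because $\overline{\cdot}$ is bijective on each graded component, every $\mathfrak{C} \in \mathcal{C}^n(\wedge^{\geq 1}E)$ has a unique preimage, namely $\mathfrak{C}|_{\Gamma(E)} \in \reallywidetilde{\mathcal{C}}^n(E)$; this is what turns the prescriptions (\ref{product_til_bar}) and the defining formula for $[\![\cdot, \cdot]\!]$ into genuine, representative-independent bilinear operations on $\mathcal{C}(\wedge^{\geq 1}E)$.

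Next I would record that, by these very definitions, $\overline{\cdot}$ intertwines the two pairs of operations:
$$\overline{\reallywidetilde{C}_1 \wedge \reallywidetilde{C}_2}=\overline{\reallywidetilde{C}}_1 \wedge \overline{\reallywidetilde{C}}_2, \qquad \overline{\left[\reallywidetilde{C}_1, \reallywidetilde{C}_2\right]_{_{\reallywidetilde{K\!W}}}}=\left[\!\!\left[\,\overline{\reallywidetilde{C}}_1, \overline{\reallywidetilde{C}}_2\,\right]\!\!\right],$$
for all $\reallywidetilde{C}_1, \reallywidetilde{C}_2 \in \reallywidetilde{\mathcal{C}}(E)$, the left identity being (\ref{product_til_bar}) and the right one the definition of $[\![\cdot, \cdot]\!]$. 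Thus $\overline{\cdot}$ is a bijection compatible with both the product and the bracket.

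Then I would invoke that $\left(\reallywidetilde{\mathcal{C}}(E), \wedge, [\cdot, \cdot]_{_{\reallywidetilde{K\!W}}}\right)$ is already a graded Poisson algebra of degree $-2$: the map $\reallywidetilde{\cdot}$ is an isomorphism of graded Poisson algebras from $\left(\mathcal{C}(E), \wedge, [\cdot, \cdot]_{_{K\!W}}\right)$, which is such an algebra by Proposition~\ref{graded_Poisson_algebra}, onto $\reallywidetilde{\mathcal{C}}(E)$. Transporting through the degree-preserving bijection $\overline{\cdot}$, every axiom valid in $\reallywidetilde{\mathcal{C}}(E)$ passes to $\mathcal{C}(\wedge^{\geq 1}E)$ --- associativity and graded commutativity of $\wedge$, graded skew-symmetry and the graded Jacobi identity of $[\![\cdot, \cdot]\!]$, and the graded Leibniz rule linking them --- with the degrees unchanged, so that $[\![\cdot, \cdot]\!]$ has degree $-2$. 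Hence $\left(\mathcal{C}(\wedge^{\geq 1}E), \wedge, [\![\cdot, \cdot]\!]\right)$ is a graded Poisson algebra of degree $-2$ and $\overline{\cdot}$ is an isomorphism onto it.

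The argument is formal, so I expect no computational difficulty; the one point that genuinely needs the earlier work is the well-definedness of $\wedge$ and $[\![\cdot, \cdot]\!]$, i.e.\ that they do not depend on the choice of $\reallywidetilde{C}$ representing a given $\overline{\reallywidetilde{C}}$. This is supplied by the uniqueness half of Proposition~\ref{1_1}, which is exactly why that correspondence was established beforehand.
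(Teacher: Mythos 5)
Your proposal is correct and follows essentially the same route as the paper: both arguments transport the graded Poisson algebra structure from $\left(\reallywidetilde{\mathcal{C}}(E), \wedge, [\cdot, \cdot]_{_{\reallywidetilde{K\!W}}}\right)$ along the bijection $\overline{\cdot}$ of Proposition~\ref{1_1}, using that $\wedge$ and $[\![\cdot, \cdot]\!]$ on $\mathcal{C}(\wedge^{\geq 1}E)$ are defined precisely by pushing forward the corresponding operations. Your explicit remark on well-definedness (independence of the representative $\reallywidetilde{C}$, supplied by the uniqueness half of Proposition~\ref{1_1}) is a point the paper leaves implicit, but it is a refinement rather than a different argument.
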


\begin{proof}
Bilinearity and graded skew-symmetry of $[\![\cdot, \cdot]\!]$ are obvious. Let us take $\overline{\reallywidetilde{C}}_i \in \mathcal{C}(\wedge^{\geq 1}E)$, $i=1,2,3$. Since
$$\left[\!\!\left[\left[\!\!\left[\,\overline{\reallywidetilde{C}}_1, \overline{\reallywidetilde{C}}_2\, \right]\!\!\right],\overline{\reallywidetilde{C}}_3\, \right]\!\!\right]=\left[\!\!\left[\,\, \overline{[{\reallywidetilde{C}}_1,{\reallywidetilde{C}}_2]_{_{K\!W}}} ,\overline{\reallywidetilde{C}}_3\,\right]\!\!\right]= \overline{\left[\left[{\reallywidetilde{C}}_1, {\reallywidetilde{C}}_2\right]_{_{\reallywidetilde{K\!W}}},{\reallywidetilde{C}}_3\right]_{_{\reallywidetilde{K\!W}}}},$$
the graded Jacobi identity of $[\![\cdot, \cdot]\!]$  follows from the graded Jacobi identity of $[\cdot, \cdot]_{_{\reallywidetilde{K\!W}}}$.
Analogously for the Leibniz rule, since
$$\left[\!\!\left[\,\overline{\reallywidetilde{C}}_1, \overline{\reallywidetilde{C}}_2 \wedge \overline{\reallywidetilde{C}}_3\, \right]\!\!\right]=\left[\!\!\left[\,\overline{\reallywidetilde{C}}_1,  \overline{{\reallywidetilde{C}}_2 \wedge {\reallywidetilde{C}}_3}\, \right]\!\!\right]=\overline{\left[{\reallywidetilde{C}}_1, {\reallywidetilde{C}}_2 \wedge {\reallywidetilde{C}}_3\right]_{_{\reallywidetilde{K\!W}}}}.$$

It is now obvious that
$$\overline{\cdot}:\left(\reallywidetilde{\mathcal{C}}(E), \wedge, [\cdot, \cdot]_{_{\reallywidetilde{K\!W}}} \right) \to \left(\mathcal{C}(\wedge^{\geq 1}E), \wedge, [\![\cdot, \cdot]\!] \right)$$
is an isomorphism of graded Poisson algebras.
\end{proof}

The triple $(\mathcal{C}(\wedge^{\geq 1}E), \wedge, [\![\cdot, \cdot]\!] )$ is called the higher Keller-Waldmann Poisson algebra.

\

Summing up what we have seen so far, the following graded Poisson algebras are isomorphic:
$$\left(\mathcal{C}(E),\wedge, [\cdot, \cdot]_{_{K\!W}} \right) \cong \left(\reallywidetilde{\mathcal{C}}(E), \wedge, [\cdot, \cdot]_{_{\reallywidetilde{K\!W}}} \right) \cong \left(\mathcal{C}(\wedge^{\geq 1}E), \wedge, [\![\cdot, \cdot]\!] \right)$$
\begin{defn} \label{Courant_n}
A higher pre-multi-Courant structure  $\mathfrak{C} \equiv \overline{\reallywidetilde{C}}\in \mathcal{C}^n(\wedge^{\geq 1}E)$, $n \geq 2$, is a higher multi-Courant structure if
$\big[\!\!\big[\,\mathfrak{C}, \mathfrak{C}\, \big]\!\!\big]=0.$ In this case, the triple $(E, \langle \cdot,\cdot \rangle, \mathfrak{C})$ is called a \emph{higher multi-Courant algebroid}.
\end{defn}

 Note that, because the bracket $[\![\cdot, \cdot]\!]$ is skew-symmetric, all $\mathfrak{C} \in \mathcal{C}^{2k}(\wedge^{\geq 1}E)$, $k\geq 1$, are higher multi-Courant structures.


\section{On Cueca-Mehta isomorphism} \label{CM isomorphism}

 In this section we consider the graded Poisson algebra
 $(\mathcal{F}_E, \cdot, \{\cdot,\cdot\})$, with $\mathcal{F}_E=C^\infty\big( p^*\big(T^*[2]E[1]\big)\big)$ (see \ref{Graded Poisson bracket}),
 and we prove that it is isomorphic to the Poisson algebra $\mathcal{C}(E)$ of multi-Courant structures and it is also isomorphic to the Poisson algebra $\mathcal{C}(\wedge^{\geq 1}E)$ of higher multi-Courant structures.

 \subsection{Isomorphism between $\mathcal{F}_E$ and $\mathcal{C}(E)$}
The isomorphism
$$
\reallywidetilde{\Upsilon}: (\mathcal{F}_E, \cdot) \to \left(\reallywidetilde{\mathcal{C}}(E), \wedge\right),$$
introduced in \cite{Cueca-Mehta},  maps $\Theta \in \mathcal{F}_E^n$, $n\geq 1$,  into $\reallywidetilde{\Upsilon}(\Theta)\in \reallywidetilde{\mathcal{C}}^{n}(E)$ given  by
\begin{equation} \label{def_Tilde_Upsilon}
\reallywidetilde{\Upsilon}(\Theta)(e_1, e_2, \ldots, e_{n})= \{e_n,  \ldots, \{e_2, \{e_1, \Theta \}\}\ldots \}\in C^\infty(M) ,
\end{equation}
with symbol
$$\sigma_{\reallywidetilde{\Upsilon}(\Theta)}(e_1, \ldots, e_{n-1})\cdot f= \{f, \{e_{n-1},  \ldots,  \{e_1, \Theta \}\}\ldots \},$$
for all $e_1, \ldots, e_{n} \in \Gamma(E)$ and $f \in C^\infty(M)$. For $n=0$, and  for all $f\in \mathcal{F}^0_E=C^\infty(M)$, $\reallywidetilde{\Upsilon}(f)=f$. Moreover, $\reallywidetilde{\Upsilon}$ is an isomorphism of graded commutative algebras \cite{Cueca-Mehta}:
\begin{equation} \label{def_wedge_Tilde_Upsilon}
\reallywidetilde{\Upsilon}(\Theta \cdot \Theta')=\reallywidetilde{\Upsilon}(\Theta) \wedge \reallywidetilde{\Upsilon}(\Theta'), \quad \Theta, \Theta' \in \mathcal{F}_E.
\end{equation}

The isomorphism $\reallywidetilde{\Upsilon}$ induces an isomorphism $$\Upsilon : (\mathcal{F}_E, \cdot) \to (\mathcal{C}(E), \wedge)$$
that maps $\Theta \in \mathcal{F}_E^n$, $n \geq 1$, into $\Upsilon(\Theta)\in \mathcal{C}^{n}(E)$ defined by
\begin{equation} \label{def_Upsilon tilde}
\langle \Upsilon(\Theta)(e_1, \ldots, e_{n-1}), e_n \rangle= \reallywidetilde{\Upsilon}(\Theta)(e_1, \ldots, e_{n}),
\end{equation}
for all $e_1, \ldots, e_{n} \in \Gamma(E)$,  and $\Upsilon(f)=f$, for all $f \in C^\infty(M)$. Due to the non-degeneracy of $\langle \cdot, \cdot \rangle$, $\Upsilon$ is well-defined and, since $\{ \cdot, \cdot \}$ is also non-degenerate, we have
\begin{equation} \label{def_Upsilon}
\Upsilon(\Theta)(e_1, e_2,  \ldots, e_{n-1})= \{e_{n-1},  \ldots, \{e_2, \{e_1, \Theta \}\}\ldots \} \in \Gamma(E).
\end{equation}
In particular, $\Upsilon(e)=e$, for all $e \in \Gamma(E)$.
The symbol of $\Upsilon(\Theta)$ is given by
\begin{equation} \label{symbol_Upsilon}
\sigma_{\Upsilon(\Theta)}(e_1, \ldots, e_{n-2})\cdot f= \{f, \{e_{n-2},  \ldots,  \{e_1, \Theta \}\}\ldots \},
\end{equation}
for all $f \in C^\infty(M)$.

\begin{rem}
Equations (\ref{def_Upsilon}) and (\ref{symbol_Upsilon}) show that, in $\mathcal{F}_E$, the extension of $C\in \mathcal{C}^n(E)$ considered in Remark~\ref{remark_3.2} and, in particular Equation~(\ref{C(f,g)}),  appears in a natural way.
\end{rem}

\begin{rem}
In the case where $E=A\oplus A^*$, the above mentioned isomorphism  $\Upsilon$ was defined in \cite{A10} recursively by the following procedure. Taking into account that the algebra $(\mathcal{C}(E), \wedge)$ is generated by its terms of degrees $0$, $1$ and $2$ \cite{Keller-Waldmann}, the map $\Upsilon$ is defined in $\mathcal{F}_{A\oplus A^*}^0$, $\mathcal{F}_{A\oplus A^*}^1$ and $\mathcal{F}_{A\oplus A^*}^2$ as follows:

$$\left\{
 \begin{array}{ll}
    \Upsilon(f)=f, & f \in C^\infty(M); \\
    \Upsilon(e)=e, & e \in \Gamma(E); \\
    \Upsilon(\Theta)=\{\cdot, \Theta\}, & \Theta \in \mathcal{F}^2.
  \end{array}
\right.$$
Then, $\Upsilon$ is  extended to  $\mathcal{F}_{A\oplus A^*}$ by linearity, asking that Equation~(\ref{morphism_product})
holds for all $\Theta \in \mathcal{F}_{A\oplus A^*}^n$ and $\Theta' \in \mathcal{F}_{A\oplus A^*}^m$. Using the Leibniz rule and the Jacobi identity for $\{\cdot, \cdot\}$,  we conclude that the isomorphisms introduced in \cite{A10} is the same as the one defined by Equation (\ref{def_Upsilon}).
\end{rem}

 Moreover, $\reallywidetilde{\Upsilon}$ being an isomorphism of graded commutative algebras,  $\Upsilon$ inherits the same property, as it is shown in the next lemma.
\begin{lem} \label{preserves_product}
For every $\Theta \in \mathcal{F}_E^n$ and $\Theta' \in \mathcal{F}_E^m$,
\begin{equation}  \label{morphism_product}
\Upsilon(\Theta \cdot \Theta')= \Upsilon(\Theta) \wedge \Upsilon(\Theta').
\end{equation}
\end{lem}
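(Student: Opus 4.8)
The plan is to avoid recomputing anything from the nested-bracket formula and instead deduce the statement formally from the already-established multiplicativity of $\reallywidetilde{\Upsilon}$ (Equation~(\ref{def_wedge_Tilde_Upsilon})), using the fact that $\Upsilon$ and $\reallywidetilde{\Upsilon}$ differ only by the algebra isomorphism $\reallywidetilde{\cdot}\colon\mathcal{C}(E)\to\reallywidetilde{\mathcal{C}}(E)$. The one observation I would record first is the compatibility identity
\[
\reallywidetilde{\Upsilon(\Theta)}=\reallywidetilde{\Upsilon}(\Theta),\qquad \Theta\in\mathcal{F}_E^n,
\]
which says that applying $\Upsilon$ and then forming the tilde coincides with $\reallywidetilde{\Upsilon}$. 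This is immediate: evaluating the left-hand side via the definition~(\ref{def_tilde_C}) of $\reallywidetilde{C}$ for $C=\Upsilon(\Theta)$ gives
\[
\reallywidetilde{\Upsilon(\Theta)}(e_1,\ldots,e_n)=\langle\Upsilon(\Theta)(e_1,\ldots,e_{n-1}),e_n\rangle=\reallywidetilde{\Upsilon}(\Theta)(e_1,\ldots,e_n),
\]
where the last equality is precisely the defining relation~(\ref{def_Upsilon tilde}) for $\Upsilon$.

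With this identity in hand, I would argue as follows. Since $\reallywidetilde{\cdot}$ is an isomorphism of graded commutative algebras, it is in particular injective, so to prove $\Upsilon(\Theta\cdot\Theta')=\Upsilon(\Theta)\wedge\Upsilon(\Theta')$ it suffices to check that the two sides have the same image under $\reallywidetilde{\cdot}$. Using the compatibility identity, then the multiplicativity~(\ref{def_wedge_Tilde_Upsilon}) of $\reallywidetilde{\Upsilon}$, then the compatibility identity again, and finally~(\ref{tilde_wedge}), one computes
\[
\reallywidetilde{\Upsilon(\Theta\cdot\Theta')}=\reallywidetilde{\Upsilon}(\Theta\cdot\Theta')=\reallywidetilde{\Upsilon}(\Theta)\wedge\reallywidetilde{\Upsilon}(\Theta')=\reallywidetilde{\Upsilon(\Theta)}\wedge\reallywidetilde{\Upsilon(\Theta')}=\reallywidetilde{\Upsilon(\Theta)\wedge\Upsilon(\Theta')}.
\]
Injectivity of $\reallywidetilde{\cdot}$ then yields $\Upsilon(\Theta\cdot\Theta')=\Upsilon(\Theta)\wedge\Upsilon(\Theta')$, and the cases where $\Theta$ or $\Theta'$ lies in degree $0$ are handled separately (and trivially) using $\Upsilon(f)=f$ and the convention $\reallywidetilde{\Upsilon}(f)=f$.

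There is no genuine obstacle here: the result is essentially the statement that $\Upsilon=\reallywidetilde{\cdot}^{\,-1}\circ\reallywidetilde{\Upsilon}$ is a composite of two algebra isomorphisms. The only point deserving care is the compatibility identity $\reallywidetilde{\Upsilon(\Theta)}=\reallywidetilde{\Upsilon}(\Theta)$, which must be verified from the definitions rather than assumed; once it is in place, the remainder is a short diagram chase that uses only the non-degeneracy of $\langle\cdot,\cdot\rangle$ (guaranteeing $\Upsilon$ is well defined) and the already-known multiplicativity of $\reallywidetilde{\Upsilon}$ and $\reallywidetilde{\cdot}$.
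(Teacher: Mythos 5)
Your proof is correct, but it is organized differently from the paper's. The paper proves the lemma by brute force: it expands $\langle \Upsilon(\Theta)\wedge\Upsilon(\Theta')(e_1,\ldots,e_{m+n-1}),e_{m+n}\rangle$ using the explicit shuffle formula (\ref{exterior_product_E}) for the product on $\mathcal{C}(E)$, pulls the scalar factors through $\langle\cdot,\cdot\rangle$ by $C^\infty(M)$-linearity, and then recombines the two sums over $Sh(m,n-1)$ and $Sh(n,m-1)$ into a single sum over $Sh(m,n)$, which it recognizes as $\reallywidetilde{\Upsilon}(\Theta)\wedge\reallywidetilde{\Upsilon}(\Theta')=\reallywidetilde{\Upsilon}(\Theta\cdot\Theta')$ before invoking non-degeneracy. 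You instead observe that $\Upsilon=\reallywidetilde{\cdot}^{\,-1}\circ\reallywidetilde{\Upsilon}$ (your compatibility identity $\reallywidetilde{\Upsilon(\Theta)}=\reallywidetilde{\Upsilon}(\Theta)$ is an immediate consequence of (\ref{def_tilde_C}) and (\ref{def_Upsilon tilde})) and conclude by composing two algebra morphisms, using injectivity of $\reallywidetilde{\cdot}$. The trade-off is this: your route is shorter and conceptually cleaner, but it leans on the identification of the product defined abstractly by (\ref{tilde_wedge}) with the explicit shuffle product (\ref{product_tilde_C}) with respect to which the Cueca--Mehta multiplicativity (\ref{def_wedge_Tilde_Upsilon}) is stated; the paper asserts this identification (attributing the algebra isomorphism $\reallywidetilde{\cdot}$ to Keller--Waldmann) but does not prove it, and its computational proof of the lemma is in effect a self-contained verification of exactly that compatibility in the instance needed, via the shuffle recombination. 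So your argument is valid within the paper's stated framework, and the paper's argument is the one you would have to write out if you wanted to avoid citing the equivalence of (\ref{tilde_wedge}) and (\ref{product_tilde_C}).
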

\begin{proof}
Using (\ref{exterior_product_E}), (\ref{def_wedge_Tilde_Upsilon}) and the $C^\infty(M)$-linearity of $\langle \cdot, \cdot \rangle$ we have, for all $e_1, \ldots, e_{m+n}$,
\begin{align*}
&\left\langle \Upsilon(\Theta) \wedge \Upsilon(\Theta')(e_1, \ldots, e_{m+n-1}),e_{m+n} \right\rangle \\
&=\left\langle  \sum_{\tau \in Sh(m, n-1)}\text{sgn}\,(\tau)\reallywidetilde{\Upsilon}(\Theta)(e_{\tau(1)},\ldots,e_{\tau(m)})\,\Upsilon(\Theta')(e_{\tau(m+1)},\ldots,e_{\tau(m+n-1)}), e_{m+n}\right\rangle\\
& + (-1)^{mn} \left\langle  \sum_{\tau \in Sh(n, m-1)}\text{sgn}\,(\tau)\reallywidetilde{\Upsilon}(\Theta')(e_{\tau(1)},\ldots,e_{\tau(n)})\,\Upsilon(\Theta)(e_{\tau(n+1)},\ldots,e_{\tau(n+m-1)}), e_{m+n}\right\rangle  \\
&= \sum_{\tau \in Sh(m, n-1)}\text{sgn}\,(\tau)\reallywidetilde{\Upsilon}(\Theta)(e_{\tau(1)},\ldots,e_{\tau(m)})\reallywidetilde{\Upsilon}(\Theta')(e_{\tau(m+1)},\ldots,e_{\tau(m+n-1)}, e_{m+n})\\
&+ (-1)^{mn}\sum_{\tau \in Sh(n, m-1)}\text{sgn}\,(\tau)\reallywidetilde{\Upsilon}(\Theta')(e_{\tau(1)},\ldots,e_{\tau(n)})\reallywidetilde{\Upsilon}(\Theta)(e_{\tau(n+1)},\ldots,e_{\tau(n+m-1)}, e_{m+n})\\
&=\sum_{\tau \in Sh(m, n)}\text{sgn}\,(\tau)\reallywidetilde{\Upsilon}(\Theta)(e_{\tau(1)},\ldots,e_{\tau(m)})\reallywidetilde{\Upsilon}(\Theta')(e_{\tau(m+1)},\ldots,e_{\tau(m+n-1)}, e_{\tau(m+n)})\\
&=\reallywidetilde{\Upsilon}(\Theta) \wedge \reallywidetilde{\Upsilon}(\Theta')(e_1, \ldots, e_{m+n})= \reallywidetilde{\Upsilon}(\Theta \cdot \Theta')(e_1, \ldots, e_{m+n})\\
&=\left\langle \Upsilon(\Theta \cdot \Theta')(e_1, \ldots, e_{m+n-1}), e_{m+n} \right \rangle.
\end{align*}
The non-degeneracy of $\langle \cdot, \cdot \rangle$ completes the proof.
\end{proof}

\

Since $\Upsilon(f)=f$, for all $f \in C^\infty(M)$, and $\Upsilon(e)=e$, for all $e \in \Gamma(E)$, Lemma~\ref{preserves_product} yields
\begin{equation} \label{Upsilon=id}
\Upsilon_{|\Gamma(\wedge^nE)}=\textrm{id}, \; n\geq 0.
\end{equation}

\

 \begin{lem} \label{lem_auxiliar}
 Let $\Theta$ be a function in $\mathcal{F}_E^n$ and $e \in \Gamma(E)$. Then,
 $$\Upsilon(\{e, \Theta \})= \imath_e(\Upsilon(\Theta))=[e, \Upsilon(\Theta)]_{_{K\!W}}.$$

 \end{lem}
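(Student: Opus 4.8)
The proof is a direct computation that unfolds the definitions of $\Upsilon$ and of the interior product, so the plan is mostly a matter of bookkeeping. I would dispatch the second equality first, since it requires no work: as $\Upsilon(\Theta)\in\mathcal{C}^n(E)$, item $(v)$ of Definition~\ref{def_KW_bracket} reads $[e,\Upsilon(\Theta)]_{_{K\!W}}=\imath_e\Upsilon(\Theta)$ by definition. The whole content of the lemma is therefore the first equality $\Upsilon(\{e,\Theta\})=\imath_e(\Upsilon(\Theta))$.

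For this, I note that $\{e,\Theta\}\in\mathcal{F}_E^{n-1}$, so both sides lie in $\mathcal{C}^{n-1}(E)$ and it suffices to evaluate them on an arbitrary tuple $e_1,\ldots,e_{n-2}\in\Gamma(E)$. Applying the defining formula (\ref{def_Upsilon}) to the degree-$(n-1)$ function $\{e,\Theta\}$ gives
$$\Upsilon(\{e,\Theta\})(e_1,\ldots,e_{n-2})=\{e_{n-2},\ldots,\{e_1,\{e,\Theta\}\}\ldots\}.$$
On the other hand, the definition (\ref{i_eC}) of the interior product gives $\imath_e(\Upsilon(\Theta))(e_1,\ldots,e_{n-2})=\Upsilon(\Theta)(e,e_1,\ldots,e_{n-2})$, and evaluating (\ref{def_Upsilon}) for $\Upsilon(\Theta)$ with $e$ in the first slot yields the very same nested expression, because the first argument of $\Upsilon(\Theta)$ sits in the innermost Poisson bracket and so contributes precisely the factor $\{e,\Theta\}$. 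Hence the two brackets agree on every tuple.

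To complete the identification in $\mathcal{C}^{n-1}(E)$ I would match the symbols, which is automatic by the uniqueness of the symbol recorded in Remark~\ref{remark_3.2} once the brackets coincide; alternatively one checks it directly from the formula for $\sigma_{\imath_e C}$ following (\ref{i_eC}) together with (\ref{symbol_Upsilon}). The low-arity cases are handled separately and trivially: for $\Theta=f\in\mathcal{F}_E^0$ all three terms vanish (using $\{e,f\}=0$, $\imath_e f=0$ and item $(ii)$ of Definition~\ref{def_KW_bracket}), and for $\Theta=e'\in\mathcal{F}_E^1$ all three equal $\langle e,e'\rangle$ (using $\{e,e'\}=\langle e,e'\rangle$, the convention $\imath_e e'=\langle e,e'\rangle$, and item $(iii)$). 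The only point demanding care — the sole ``obstacle'' — is the reversed argument ordering built into (\ref{def_Upsilon}): the first entry generates the innermost derived bracket, so prepending $e$ through $\imath_e$ is exactly what manufactures the extra innermost factor $\{e,\Theta\}$. Once this is kept straight the identity is transparent.
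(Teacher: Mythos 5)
Your proposal is correct and follows essentially the same route as the paper: both evaluate $\Upsilon(\{e,\Theta\})$ on an arbitrary tuple via the derived-bracket formula~(\ref{def_Upsilon}), recognize the result as $\Upsilon(\Theta)(e,e_1,\ldots,e_{n-2})=\imath_e(\Upsilon(\Theta))(e_1,\ldots,e_{n-2})$, and note that the second equality is just item $(v)$ of Definition~\ref{def_KW_bracket}. Your additional remarks on matching symbols and on the low-degree cases are fine but not needed; the paper omits them.
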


 \begin{proof}
 For all $e_1,\ldots,e_{n-2}\in \Gamma(E)$, we have
 \begin{align*}
   \Upsilon(\{e, \Theta \})(e_1, \ldots, e_{n-2})&=\{e_{n-2}, \ldots,\{e_1,\{e,\Theta\}\}\ldots\}\\
   &=\Upsilon(\Theta)(e, e_1, \ldots, e_{n-2})\\
   &=\imath_e(\Upsilon(\Theta))(e_1, \ldots, e_{n-2}).
 \end{align*}
Thus, $$\Upsilon(\{e, \Theta \})= \imath_e(\Upsilon(\Theta)).$$
 \end{proof}

\begin{prop}  \label{morf_alg_Lie}
Let $\Theta \in \mathcal{F}_E^n$ and $\Theta' \in \mathcal{F}_E^m$, $n,m \geq 0$. Then,
\begin{equation} \label{eq_morf_alg_Lie1}
\Upsilon(\{ \Theta, \Theta' \})= [\Upsilon(\Theta), \Upsilon(\Theta')]_{_{K\!W}}.
\end{equation}
\end{prop}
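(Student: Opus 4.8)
The plan is to exploit that $\Upsilon$ is already known to be an isomorphism of graded commutative algebras (Lemma~\ref{preserves_product}) and to reduce the bracket identity to a short list of base cases on algebra generators. Set $\Delta(\Theta,\Theta') := \Upsilon(\{\Theta,\Theta'\}) - [\Upsilon(\Theta),\Upsilon(\Theta')]_{_{K\!W}}$ and observe that both summands are graded biderivations of the same degree $-2$: the Poisson bracket $\{\cdot,\cdot\}$ is a biderivation of $(\mathcal{F}_E,\cdot)$ by construction, while $[\cdot,\cdot]_{_{K\!W}}$ is a biderivation of $(\mathcal{C}(E),\wedge)$ by (\ref{derivation_KW_bracket}) together with graded skew-symmetry. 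Since $\Upsilon$ intertwines the two degree-zero products and preserves degrees, the Leibniz signs agree on both sides, so $\Delta(\cdot,\Theta')$ is, through $\Upsilon$, a graded derivation $(\mathcal{F}_E,\cdot)\to(\mathcal{C}(E),\wedge)$; by graded skew-symmetry of the two brackets the same holds in the second argument. A biderivation that vanishes whenever both of its arguments are algebra generators vanishes identically, so, as $\mathcal{F}_E$ is generated as a graded commutative algebra by its components of degrees $0,1,2$, it suffices to prove (\ref{eq_morf_alg_Lie1}) when $\Theta$ and $\Theta'$ both lie in $\mathcal{F}_E^0\oplus\mathcal{F}_E^1\oplus\mathcal{F}_E^2$.

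Next I would dispatch the base cases in which at least one argument has degree $\le 1$. For $f,g\in C^\infty(M)$ and $e,e'\in\Gamma(E)$ these reduce, via $\Upsilon(f)=f$ and $\Upsilon(e)=e$ (see (\ref{Upsilon=id})), to the defining relations of the two brackets: on one side $\{f,g\}=0$, $\{f,e\}=0$, $\{e,e'\}=\langle e,e'\rangle$, and on the other these match items (i)--(iii) of Definition~\ref{def_KW_bracket}. For $\Xi\in\mathcal{F}_E^2$ and $f\in C^\infty(M)$, the element $\{f,\Xi\}$ has degree $0$, so the left-hand side is the function $\{f,\Xi\}$, whereas item (iv) of Definition~\ref{def_KW_bracket} identifies the right-hand side with $\sigma_{\Upsilon(\Xi)}\cdot f$, which equals $\{f,\Xi\}$ by the symbol formula (\ref{symbol_Upsilon}) taken with $n=2$. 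Finally, for $e\in\Gamma(E)$ and $\Xi\in\mathcal{F}_E^2$ the required identity is exactly Lemma~\ref{lem_auxiliar}.

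The remaining, and genuinely substantial, case is $\Theta=\Xi$, $\Theta'=\Xi'$ with both in $\mathcal{F}_E^2$; here $\{\Xi,\Xi'\}\in\mathcal{F}_E^2$ and both sides of (\ref{eq_morf_alg_Lie1}) lie in $\mathcal{C}^2(E)$. An element of $\mathcal{C}^2(E)$ is a $1$-ary bracket and is therefore determined by its interior products $\imath_e$, $e\in\Gamma(E)$ (its symbol being then forced by uniqueness, Remark~\ref{remark_3.2}). Thus it is enough to show $\imath_e\Upsilon(\{\Xi,\Xi'\})=\imath_e[\Upsilon(\Xi),\Upsilon(\Xi')]_{_{K\!W}}$ for every $e$. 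By Lemma~\ref{lem_auxiliar} the left-hand side equals $\Upsilon(\{e,\{\Xi,\Xi'\}\})$. For the right-hand side I would apply the recursion (\ref{i_e[C1,C2]}) with $n=2$, then use Lemma~\ref{lem_auxiliar} and item (v) of Definition~\ref{def_KW_bracket} to rewrite $[e,\Upsilon(\Xi)]_{_{K\!W}}=\Upsilon(\{e,\Xi\})$ and $[e,\Upsilon(\Xi')]_{_{K\!W}}=\Upsilon(\{e,\Xi'\})$ (both of degree $1$, hence fixed by $\Upsilon$); this collapses the right-hand side to $\Upsilon(\{\{e,\Xi\},\Xi'\}) - \Upsilon(\{\{e,\Xi'\},\Xi\})$. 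The two expressions then coincide precisely by the graded Jacobi identity for $\{\cdot,\cdot\}$, namely $\{e,\{\Xi,\Xi'\}\}=\{\{e,\Xi\},\Xi'\}-\{\{e,\Xi'\},\Xi\}$.

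The main obstacle is exactly this degree-$(2,2)$ case: it is the only pair of generators not reachable through a product, and carrying it out cleanly requires the sign bookkeeping hidden in the recursion (\ref{i_e[C1,C2]}) and in the passage from $[\Upsilon(\Xi),\Upsilon(\{e,\Xi'\})]_{_{K\!W}}$ to $-\,\imath_{\{e,\Xi'\}}\Upsilon(\Xi)$ via item (v) of Definition~\ref{def_KW_bracket}; these signs must be matched against those produced by the graded Jacobi identity so that the minus sign in $\{\{e,\Xi\},\Xi'\}-\{\{e,\Xi'\},\Xi\}$ comes out correctly.
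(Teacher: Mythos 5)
Your proposal is correct, but it is organized differently from the paper's proof. The paper argues by induction on the total degree $n+m$: the base cases are exactly your list for $n+m\le 2$, and the inductive step shows $\imath_e\Upsilon(\{\Theta,\Theta'\})=\imath_e[\Upsilon(\Theta),\Upsilon(\Theta')]_{_{K\!W}}$ for all $e\in\Gamma(E)$ using Lemma~\ref{lem_auxiliar}, the graded Jacobi identity of $\{\cdot,\cdot\}$, the induction hypothesis, and the recursion (\ref{i_e[C1,C2]}). You instead observe that the defect $\Delta$ is a biderivation along the algebra isomorphism $\Upsilon$ (via (\ref{derivation_KW_bracket}) and the Leibniz rule for $\{\cdot,\cdot\}$) and invoke the fact, cited from Keller--Waldmann elsewhere in the paper, that the algebra is generated in degrees $0,1,2$; this reduces everything to finitely many pairs of generators, of which only the degree-$(2,2)$ pair is nontrivial. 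Your treatment of that pair --- determining both sides by their interior products, applying (\ref{i_e[C1,C2]}), Lemma~\ref{lem_auxiliar} and item (v) of Definition~\ref{def_KW_bracket}, and matching against $\{e,\{\Xi,\Xi'\}\}=\{\{e,\Xi\},\Xi'\}-\{\{e,\Xi'\},\Xi\}$ --- is precisely one instance of the paper's inductive step, with the correct signs. In effect you have traded the induction for the generation theorem: the paper's route is self-contained and uniform in degree, while yours confines all computation to a short finite list of base cases at the price of importing the generation result and of making the biderivation property of $[\cdot,\cdot]_{_{K\!W}}$ an explicit ingredient. Both arguments rest on the same core facts, and yours is complete as written (the standard observation that a biderivation vanishing on pairs of generators vanishes identically needs only a routine induction on monomial length, which you may want to spell out).
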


\begin{proof}
The proof is done by induction on the sum $n+m$ of degrees of $\Theta$ and  $\Theta'$. First, let us prove directly that (\ref{eq_morf_alg_Lie1}) holds for all possible cases such that $n+m\leq 2$.
For $f,g \in C^\infty(M)$, $e \in \Gamma(E)$ and $\delta\in \mathcal{F}_E^2$,  using Definition~\ref{def_KW_bracket} and (\ref{Upsilon=id}),  we have:
\begin{enumerate}
\item
$\Upsilon (\{f,g\})=0=[\Upsilon(f), \Upsilon(g))]_{_{K\!W}};$
\item
$\Upsilon (\{f,e\})=0=[\Upsilon(f), \Upsilon(e)]_{_{K\!W}};$
\item
$\Upsilon (\{e, e'\})=\Upsilon (\langle e, e' \rangle)=\langle e, e' \rangle =[e,e']_{_{K\!W}}=[\Upsilon(e), \Upsilon(e')]_{_{K\!W}};$
\item
$\Upsilon (\{f, \delta\})=\{f, \delta\}=\sigma_{\Upsilon(\delta)}\cdot f= [\Upsilon(f), \Upsilon(\delta)]_{_{K\!W}}$.
\end{enumerate}

Now, let us assume that (\ref{eq_morf_alg_Lie1}) holds for $n+m\leq k$, $k\geq 2$. Take $\Theta \in \mathcal{F}_E^n$ and  $\Theta' \in \mathcal{F}_E^m$, with $n+m=k+1$. For every $e \in \Gamma(E)$, using  (\ref{i_e[C1,C2]}), Lemma~\ref{lem_auxiliar} and the Jacobi identity  of $\{\cdot, \cdot \}$, we have
\begin{eqnarray*}
\imath_e (\Upsilon(\{\Theta, \Theta' \}))&=&\Upsilon(\{e,\{ \Theta, \Theta'\} \})
=\Upsilon\big(\{ \{e,\Theta\}, \Theta' \} \}+ (-1)^n \{\Theta, \{e,\Theta'\}\}\big)\\
&=&[\Upsilon(\{e,\Theta \}), \Upsilon(\Theta')]_{_{K\!W}}+ (-1)^{n} [\Upsilon(\Theta),\Upsilon(\{e,\Theta\})]_{_{K\!W}}\\
&=&[\imath_e(\Upsilon(\Theta)), \Upsilon(\Theta')]_{_{K\!W}}+ (-1)^{n} [\Upsilon(\Theta), \imath_e( \Upsilon(\Theta'))]_{_{K\!W}}\\
&\stackrel{(\ref{i_e[C1,C2]})}{=}&\imath_e[\Upsilon(\Theta), \Upsilon(\Theta')]_{_{K\!W}},
\end{eqnarray*}
where in the third equality we use the induction hypothesis. Since $e \in \Gamma(E)$ is arbitrary, (\ref{eq_morf_alg_Lie1}) is proved.

\end{proof}

We have proved the following.

\begin{thm} \label{th_Poisson_morphism1}
The map
$\Upsilon: (\mathcal{F}_E, \cdot, \{\cdot, \cdot\}) \to (\mathcal{C}(E), \wedge, [\cdot, \cdot]_{_{K\!W}})$ is a degree zero isomorphism of graded Poisson algebras.
\end{thm}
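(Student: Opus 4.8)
The plan is to recognise that the statement collects the three structural properties of $\Upsilon$ already established, so the proof reduces to assembling them together with a bijectivity argument. First I would record that $\Upsilon$ is \emph{degree preserving}: by its defining Equation~(\ref{def_Upsilon}) it carries $\mathcal{F}_E^n$ into $\mathcal{C}^n(E)$, and in degrees $0$ and $1$ it is the identity, as noted in~(\ref{Upsilon=id}). Thus it is a degree zero map of graded spaces, and it remains only to check bijectivity and compatibility with the two operations.

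For bijectivity the cleanest route is to factor $\Upsilon$ through the tilde correspondence. The defining relation~(\ref{def_Upsilon tilde}) says exactly that $\reallywidetilde{\Upsilon(\Theta)}=\reallywidetilde{\Upsilon}(\Theta)$, that is, $\reallywidetilde{\cdot}\circ\Upsilon=\reallywidetilde{\Upsilon}$. Since $\reallywidetilde{\Upsilon}$ is an isomorphism by Cueca--Mehta, and the tilde map $\reallywidetilde{\cdot}\colon\mathcal{C}(E)\to\reallywidetilde{\mathcal{C}}(E)$ is an isomorphism (which rests on the non-degeneracy of $\langle\cdot,\cdot\rangle$), it follows that $\Upsilon=(\reallywidetilde{\cdot})^{-1}\circ\reallywidetilde{\Upsilon}$ is a bijection.

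It then remains to verify that $\Upsilon$ is a morphism for both the commutative product and the bracket. Preservation of the product, $\Upsilon(\Theta\cdot\Theta')=\Upsilon(\Theta)\wedge\Upsilon(\Theta')$, is precisely Lemma~\ref{preserves_product}, and preservation of the bracket, $\Upsilon(\{\Theta,\Theta'\})=[\Upsilon(\Theta),\Upsilon(\Theta')]_{_{K\!W}}$, is precisely Proposition~\ref{morf_alg_Lie}. Combining degree zero, bijectivity, and these two compatibilities yields that $\Upsilon$ is a degree zero isomorphism of graded Poisson algebras.

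I do not expect a genuine obstacle at this final stage: the substantive work was already done in Proposition~\ref{morf_alg_Lie}, whose induction on $n+m$ reduces the general bracket identity to the derived-bracket recursion~(\ref{i_e[C1,C2]}) through Lemma~\ref{lem_auxiliar} and the Jacobi identity of $\{\cdot,\cdot\}$. The only point deserving a line of care is bijectivity, where one must invoke the invertibility of the tilde correspondence, itself a consequence of the non-degeneracy of $\langle\cdot,\cdot\rangle$.
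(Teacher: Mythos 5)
Your proposal is correct and follows essentially the same route as the paper, which states the theorem immediately after Lemma~\ref{preserves_product} and Proposition~\ref{morf_alg_Lie} with the remark ``We have proved the following'': bijectivity comes from $\Upsilon=(\reallywidetilde{\cdot})^{-1}\circ\reallywidetilde{\Upsilon}$ via the non-degeneracy of $\langle\cdot,\cdot\rangle$, and compatibility with the product and bracket are exactly those two results.
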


Now, we prove a result announced in Remark~\ref{expression_P}.

\begin{lem} \label{vanishing_symbol}
If $C$ is an element of $\mathcal{C}^n(E)$ with $\sigma_C=0$, then $C\in  \Gamma(\wedge^n E)$.
\end{lem}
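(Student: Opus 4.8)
The plan is to exploit the Poisson isomorphism $\Upsilon$ just established in Theorem~\ref{th_Poisson_morphism1}. Since $\Upsilon:\mathcal{F}_E\to\mathcal{C}(E)$ is an isomorphism, I would write $C=\Upsilon(\Theta)$ for a unique $\Theta\in\mathcal{F}_E^n$; by (\ref{Upsilon=id}) it then suffices to prove that $\Theta\in\Gamma(\wedge^n E)$, since in that case $C=\Upsilon(\Theta)=\Theta$. So the whole statement reduces to reading off, from the vanishing of $\sigma_C$, that $\Theta$ is a genuine $E$-multivector.

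The key step is to rewrite the symbol. By (\ref{symbol_Upsilon}), $\sigma_C(e_1,\dots,e_{n-2})\cdot f=\{f,\{e_{n-2},\dots,\{e_1,\Theta\}\}\cdots\}$. Because $f$ has degree $0$ and $\{f,e\}=0$ for every $e\in\Gamma(E)$ (Subsection~\ref{Graded Poisson bracket}), the graded Jacobi identity lets me commute $\{f,\cdot\}$ past each inner bracket $\{e_i,\cdot\}$: at every step the cross term $\{\{f,e_i\},\cdot\}$ vanishes, so (up to a sign that plays no role) one gets $\{f,\{e_{n-2},\dots,\{e_1,\Theta\}\}\cdots\}=\{e_{n-2},\dots,\{e_1,\{f,\Theta\}\}\cdots\}$. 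The right-hand side is precisely $\reallywidetilde{\Upsilon}(\{f,\Theta\})(e_1,\dots,e_{n-2})$ by (\ref{def_Tilde_Upsilon}), where $\{f,\Theta\}\in\mathcal{F}_E^{n-2}$. Since $\reallywidetilde{\Upsilon}$ is injective, $\sigma_C=0$ holds if and only if $\reallywidetilde{\Upsilon}(\{f,\Theta\})=0$ for all $f$, i.e. if and only if $\{f,\Theta\}=0$ for all $f\in C^\infty(M)$.

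It then remains to prove the characterization $\{f,\Theta\}=0$ for all $f\iff\Theta\in\Gamma(\wedge^n E)$, that is, that $\Gamma(\wedge^n E)$ is exactly the centralizer of $C^\infty(M)$ in $\mathcal{F}_E^n$. One implication is immediate, since $\Gamma(\wedge^n E)$ is the degree-$n$ part of the subalgebra generated by $\mathcal{F}_E^0$ and $\mathcal{F}_E^1$, on which $\{f,\cdot\}$ acts as a derivation with $\{f,g\}=\{f,e\}=0$. For the converse I would work in local Roytenberg coordinates $(x^i,\xi^a,p_i)$ of degrees $0,1,2$: here $\{f,\cdot\}$ annihilates the $x^i$ and $\xi^a$ and pairs $f$ only with the momenta, $\{f,p_i\}=\pm\,\partial f/\partial x^i$, so $\{f,\Theta\}$ vanishes for all $f$ exactly when $\Theta$ is independent of the $p_i$, i.e. when $\Theta$ is a polynomial in the $\xi^a$ alone, which is precisely an element of $\Gamma(\wedge^n E)$. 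This coordinate computation is the only non-formal point and is where I expect the main (mild) obstacle to lie. Combining the three steps gives $\Theta\in\Gamma(\wedge^n E)$, and hence $C=\Theta\in\Gamma(\wedge^n E)$.

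Alternatively, one can avoid $\mathcal{F}_E$ altogether: with $\sigma_C=0$, the reformulation (\ref{def_pre-Courant_3}) shows that $\reallywidetilde{C}$ is totally antisymmetric, and since $\reallywidetilde{C}$ is always $C^\infty(M)$-linear in its last slot it is then tensorial in every slot; a totally antisymmetric $C^\infty(M)$-multilinear map $\Gamma(E)\times\stackrel{(n)}{\ldots}\times\Gamma(E)\to C^\infty(M)$ is $\reallywidetilde{P}$ for a unique $P\in\Gamma(\wedge^n E)$ by Remark~\ref{expression_P} and the nondegeneracy of $\langle\cdot,\cdot\rangle$, whence $C=P$. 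I would present the $\Upsilon$-based argument as the main route, as it is the natural payoff of the isomorphism established immediately above.
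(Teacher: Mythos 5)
Your main argument is correct and is essentially the paper's own proof: write $C=\Upsilon(\Theta)$, use the symbol formula (\ref{symbol_Upsilon}) and the Jacobi identity to reduce $\sigma_C=0$ to $\{f,\Theta\}=0$ for all $f$, invoke the characterization of $\Gamma(\wedge^n E)$ as the centralizer of $C^\infty(M)$ (which the paper simply cites from \cite{royContemp} rather than verifying in coordinates), and conclude via $\Upsilon|_{\Gamma(\wedge^n E)}=\mathrm{id}$.
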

\begin{proof}
    Let us consider $C \in \mathcal{C}^n(E)$ such that $\sigma_C=0$. Because $\Upsilon$ is an isomorphism, $C=\Upsilon(\Theta)$, for some $\Theta \in \mathcal{F}^n_E$. Using (\ref{symbol_Upsilon}) and the Jacobi identity for $\{\cdot,\cdot\}$, we have
    \begin{align*}
      0&=\sigma_{C}(e_1, \ldots, e_{n-2})\cdot f= \{f, \{e_{n-2},  \ldots,  \{e_1, \Theta \}\}\ldots \} \\
      &=\{e_{n-2},  \ldots,  \{e_1, \{f,  \Theta \}\}\ldots \},
    \end{align*}
for all $e_1, \ldots, e_{n-2} \in \Gamma(E)$ and $f \in C^\infty(M)$. The non-degeneracy of $\{\cdot,\cdot\}$ implies that  equation above is equivalent to $$\{f, \Theta\}=0, \text{ for all } f \in C^\infty(M),$$
which means (see \cite{royContemp}) that $\Theta \in \Gamma(\wedge^n E)\subset \mathcal{F}^n_E$. Therefore, since $\Upsilon_{|\Gamma(\wedge^n E)}$ is the identity map, we have $C\in  \Gamma(\wedge^n E)$.
\end{proof}
\

\subsection{Isomorphism between $\mathcal{F}_E$ and $\mathcal{C}(\wedge^{\geq 1} E)$}
 For establishing the isomorphism between the graded Poisson algebras $(\mathcal{F}_E, \cdot, \{\cdot,\cdot \})$ and $\left(\mathcal{C}(\wedge^{\geq 1} E),\wedge, [\![\cdot, \cdot]\!] \right)$, we have to consider $\reallywidetilde{\Upsilon}$.

The isomorphism $
\reallywidetilde{\Upsilon}: \mathcal{F}_E \to \reallywidetilde{\mathcal{C}}(E)$, defined by Equation (\ref{def_Tilde_Upsilon}), naturally gives rise to a map
$$
\overline{\reallywidetilde{\Upsilon}}: \mathcal{F}_E  \rightarrow \mathcal{C}(\wedge^{\geq 1} E)$$
such that
$$ \Theta \in \mathcal{F}^n_E  \mapsto \overline{\reallywidetilde{\Upsilon}}(\Theta):= \overline{\reallywidetilde{\Upsilon}(\Theta)}\in \mathcal{C}^n(\wedge^{\geq 1} E),
$$
where $\overline{\reallywidetilde{\Upsilon}(\Theta)}$ is the extension by derivation in each entry of $\reallywidetilde{\Upsilon}(\Theta) \in \reallywidetilde{\mathcal{C}}^n(E)$.

\begin{thm}\label{Upsilon_Isom_Poisson}
The map $\overline{\reallywidetilde{\Upsilon}}: (\mathcal{F}_E, \cdot, \{\cdot,\cdot \})  \rightarrow \left(\mathcal{C}(\wedge^{\geq 1} E), \wedge, [\![\cdot, \cdot]\!] \right)$ is an  isomorphism of graded Poisson algebras.
\end{thm}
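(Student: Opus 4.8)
The plan is to factor this isomorphism through the two isomorphisms already established in the excerpt, rather than verifying the Poisson algebra axioms directly for $\overline{\reallywidetilde{\Upsilon}}$. Observe that by definition $\overline{\reallywidetilde{\Upsilon}} = \overline{\cdot} \circ \reallywidetilde{\Upsilon}$, where $\reallywidetilde{\Upsilon}\colon \mathcal{F}_E \to \reallywidetilde{\mathcal{C}}(E)$ and $\overline{\cdot}\colon \reallywidetilde{\mathcal{C}}(E) \to \mathcal{C}(\wedge^{\geq 1}E)$. The map $\overline{\cdot}$ has already been shown (in the theorem concluding Section~\ref{HMC section}) to be an isomorphism of graded Poisson algebras from $\left(\reallywidetilde{\mathcal{C}}(E), \wedge, [\cdot, \cdot]_{_{\reallywidetilde{K\!W}}}\right)$ onto $\left(\mathcal{C}(\wedge^{\geq 1}E), \wedge, [\![\cdot, \cdot]\!]\right)$. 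So the entire statement reduces to showing that $\reallywidetilde{\Upsilon}\colon (\mathcal{F}_E, \cdot, \{\cdot,\cdot\}) \to \left(\reallywidetilde{\mathcal{C}}(E), \wedge, [\cdot, \cdot]_{_{\reallywidetilde{K\!W}}}\right)$ is itself an isomorphism of graded Poisson algebras, and then composing.

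First I would recall that $\reallywidetilde{\Upsilon}$ is already known to be a degree zero isomorphism of graded commutative algebras by Equation~(\ref{def_wedge_Tilde_Upsilon}), so what remains is that it intertwines the Poisson brackets: $\reallywidetilde{\Upsilon}(\{\Theta, \Theta'\}) = \left[\reallywidetilde{\Upsilon}(\Theta), \reallywidetilde{\Upsilon}(\Theta')\right]_{_{\reallywidetilde{K\!W}}}$ for all $\Theta, \Theta' \in \mathcal{F}_E$. This is where Theorem~\ref{th_Poisson_morphism1} does the work: it has been proved that $\Upsilon\colon (\mathcal{F}_E, \cdot, \{\cdot,\cdot\}) \to (\mathcal{C}(E), \wedge, [\cdot, \cdot]_{_{K\!W}})$ is a Poisson isomorphism, via Proposition~\ref{morf_alg_Lie}. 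Since $\reallywidetilde{\Upsilon} = \reallywidetilde{\cdot} \circ \Upsilon$ by the defining Equation~(\ref{def_Upsilon tilde}), and since $\reallywidetilde{\cdot}\colon (\mathcal{C}(E), \wedge, [\cdot, \cdot]_{_{K\!W}}) \to (\reallywidetilde{\mathcal{C}}(E), \wedge, [\cdot, \cdot]_{_{\reallywidetilde{K\!W}}})$ is a Poisson isomorphism (established in Section~\ref{alternative def}), the map $\reallywidetilde{\Upsilon}$ is a composite of two Poisson isomorphisms and hence a Poisson isomorphism.

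Putting the pieces together, $\overline{\reallywidetilde{\Upsilon}} = \overline{\cdot} \circ \reallywidetilde{\cdot} \circ \Upsilon$ is a composition of three isomorphisms of graded Poisson algebras, so it is itself such an isomorphism; this is the whole argument. I expect the only genuine subtlety will be bookkeeping: confirming that the definition $\overline{\reallywidetilde{\Upsilon}}(\Theta) := \overline{\reallywidetilde{\Upsilon}(\Theta)}$ given just before the statement really does coincide with the composite $\overline{\cdot} \circ \reallywidetilde{\Upsilon}$ on the nose (which is immediate from how $\overline{\cdot}$ and $\overline{\reallywidetilde{\Upsilon}}$ are defined), and that all three constituent maps use the \emph{same} underlying identifications so that the composite is defined without any reindexing ambiguity. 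There is no hard analytic or combinatorial obstacle here, since every nontrivial bracket computation has been discharged earlier: the heavy lifting is the induction in Proposition~\ref{morf_alg_Lie} and the derivation-extension compatibility in the theorem of Section~\ref{HMC section}, both of which may be invoked. Thus I would write the proof as a short two-line composition argument, explicitly naming the three isomorphisms and citing the results that make each one a Poisson map.
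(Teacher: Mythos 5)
Your proposal is correct and is essentially the paper's own argument: the paper's proof is precisely the chain of equalities $\overline{\reallywidetilde{\Upsilon}}(\{\Theta,\Theta'\})=\overline{\reallywidetilde{\Upsilon(\{\Theta,\Theta'\})}}=\overline{\reallywidetilde{[\Upsilon(\Theta),\Upsilon(\Theta')]_{_{K\!W}}}}=\left[\!\!\left[\,\overline{\reallywidetilde{\Upsilon}}(\Theta),\overline{\reallywidetilde{\Upsilon}}(\Theta')\,\right]\!\!\right]$ obtained by unwinding the factorization $\overline{\cdot}\circ\reallywidetilde{\cdot}\circ\Upsilon$ and invoking Proposition~\ref{morf_alg_Lie} together with the defining equations of the two transported brackets, plus the analogous chain for the product. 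Your phrasing as a composition of three already-established Poisson isomorphisms is the same argument stated abstractly, and the bookkeeping point you flag is indeed immediate.
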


\begin{proof}
For every $\Theta \in \mathcal{F}^n_E$ and $\Theta' \in \mathcal{F}^m_E$, we have
\begin{multline*}
\overline{\reallywidetilde{\Upsilon}}(\{ \Theta, \Theta' \})=\overline{\reallywidetilde{\Upsilon}(\{ \Theta, \Theta' \})}= \overline{\reallywidetilde{\Upsilon(\{ \Theta, \Theta' \})}}= \overline{\reallywidetilde{[\Upsilon(\Theta), \Upsilon(\Theta')]_{_{K\!W}}}}\\=\overline{\left[\,\reallywidetilde{\Upsilon}(\Theta), \reallywidetilde{\Upsilon}(\Theta')\, \right]_{_{\reallywidetilde{K\!W}}}}=\left[\!\!\left[\, \overline{\reallywidetilde{\Upsilon}}(\Theta), \overline{\reallywidetilde{\Upsilon}}(\Theta')\, \right]\!\!\right].
\end{multline*}

Moreover,  (\ref{product_til_bar}), (\ref{tilde_wedge}) and (\ref{morphism_product}) yield
$$\overline{\reallywidetilde{\Upsilon}}(\Theta \cdot \Theta')=\overline{\reallywidetilde{\Upsilon}}(\Theta)\wedge \overline{\reallywidetilde{\Upsilon}}(\Theta').$$
\end{proof}

\begin{rem}
We should stress that, although
$$\Upsilon(\Theta)(e_1, \ldots, e_{n-1})= \{e_{n-1},  \ldots, \{e_2, \{e_1, \Theta \}\}\ldots \},$$
for all $e_1, \ldots, e_{n-1} \in \Gamma(E)$, in general
$$\overline{\reallywidetilde{\Upsilon}}(\Theta)(P_1, \ldots, P_{n})\neq\{P_{n},  \ldots, \{P_2, \{P_1, \Theta \}\}\ldots \},$$
for all $P_1, \ldots, P_{n} \in \Gamma(\wedge^{\geq 1} E)$. The difference comes from the fact that the derivation property of the higher multi-Courant structure (see Equation (\ref{bar_C})) do not match the Leibniz property of the (iterated) bracket $\{\cdot,\cdot\}$.

\end{rem}

Notice that from the proof of Theorem \ref{Upsilon_Isom_Poisson}, we get the following result.

\begin{cor}
The map
$\reallywidetilde{\Upsilon} : (\mathcal{F}_E, \cdot, \{\cdot, \cdot\}) \to \left(\reallywidetilde{\mathcal{C}}(E), \wedge, [\cdot, \cdot]_{_{\reallywidetilde{K\!W}}}\right)$ is an  isomorphism of graded Poisson algebras.
\end{cor}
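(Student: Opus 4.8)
The plan is to observe that this corollary demands essentially no new computation beyond what the proof of Theorem~\ref{Upsilon_Isom_Poisson} already contains. Since $\reallywidetilde{\Upsilon}$ is known from \cite{Cueca-Mehta} to be a bijection preserving the commutative products (Equation~(\ref{def_wedge_Tilde_Upsilon})), the only thing left to verify is that $\reallywidetilde{\Upsilon}$ intertwines the two Poisson brackets, i.e.\ that
$$\reallywidetilde{\Upsilon}(\{\Theta, \Theta'\})=\left[\reallywidetilde{\Upsilon}(\Theta),\reallywidetilde{\Upsilon}(\Theta')\right]_{_{\reallywidetilde{K\!W}}}$$
for all $\Theta \in \mathcal{F}_E^n$ and $\Theta' \in \mathcal{F}_E^m$.

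First I would record the identity $\reallywidetilde{\Upsilon}=\reallywidetilde{\,\cdot\,}\circ \Upsilon$, that is $\reallywidetilde{\Upsilon}(\Theta)=\reallywidetilde{\Upsilon(\Theta)}$, which is immediate on comparing the defining formula~(\ref{def_Upsilon tilde}) with the definition~(\ref{def_tilde_C}) of $\reallywidetilde{C}$. Combining this with Proposition~\ref{morf_alg_Lie} (bracket compatibility of $\Upsilon$) and the very definition~(\ref{bracket_tilde_C}) of $[\cdot,\cdot]_{_{\reallywidetilde{K\!W}}}$, the computation runs as
\begin{align*}
\reallywidetilde{\Upsilon}(\{\Theta, \Theta'\})
&=\reallywidetilde{\Upsilon(\{\Theta, \Theta'\})}
=\reallywidetilde{[\Upsilon(\Theta),\Upsilon(\Theta')]_{_{K\!W}}}\\
&=\left[\reallywidetilde{\Upsilon(\Theta)},\reallywidetilde{\Upsilon(\Theta')}\right]_{_{\reallywidetilde{K\!W}}}
=\left[\reallywidetilde{\Upsilon}(\Theta),\reallywidetilde{\Upsilon}(\Theta')\right]_{_{\reallywidetilde{K\!W}}}.
\end{align*}
This is precisely the chain of equalities displayed in the proof of Theorem~\ref{Upsilon_Isom_Poisson} with the outer extension-by-derivation $\overline{\,\cdot\,}$ stripped away, so the corollary is genuinely read off from that proof.

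Finally, I would note that the same conclusion can be packaged purely structurally: $\reallywidetilde{\Upsilon}$ factors as the composite of the graded Poisson isomorphism $\Upsilon\colon(\mathcal{F}_E,\cdot,\{\cdot,\cdot\})\to(\mathcal{C}(E),\wedge,[\cdot,\cdot]_{_{K\!W}})$ of Theorem~\ref{th_Poisson_morphism1} with the isomorphism $\reallywidetilde{\,\cdot\,}\colon(\mathcal{C}(E),\wedge,[\cdot,\cdot]_{_{K\!W}})\to(\reallywidetilde{\mathcal{C}}(E),\wedge,[\cdot,\cdot]_{_{\reallywidetilde{K\!W}}})$, and a composite of Poisson isomorphisms is again a Poisson isomorphism. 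Either way, there is no genuine obstacle here: the substantive work was already invested in Proposition~\ref{morf_alg_Lie}, and the only point requiring care is keeping the identification $\reallywidetilde{\Upsilon}(\Theta)=\reallywidetilde{\Upsilon(\Theta)}$ and the defining nature of $[\cdot,\cdot]_{_{\reallywidetilde{K\!W}}}$ straight throughout.
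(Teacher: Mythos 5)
Your proposal is correct and matches the paper's own justification: the paper derives this corollary by observing that the middle equalities in the displayed chain in the proof of Theorem~\ref{Upsilon_Isom_Poisson} — namely $\reallywidetilde{\Upsilon(\{\Theta,\Theta'\})}=\reallywidetilde{[\Upsilon(\Theta),\Upsilon(\Theta')]_{_{K\!W}}}=\bigl[\reallywidetilde{\Upsilon}(\Theta),\reallywidetilde{\Upsilon}(\Theta')\bigr]_{_{\reallywidetilde{K\!W}}}$ — already give the bracket compatibility of $\reallywidetilde{\Upsilon}$, exactly as you argue. Your alternative packaging as the composite $\reallywidetilde{\,\cdot\,}\circ\Upsilon$ is the same argument in different clothing, so there is nothing further to add.
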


\begin{rem}
The $[\cdot, \cdot ]_{_{\reallywidetilde{K\!W}}}$ bracket, given by Equation~(\ref{bracket_tilde_C}), is the bracket announced, but not explicitly defined, in Remark 2.6 of \cite{Cueca-Mehta}.
\end{rem}

\

\noindent {\bf Acknowledgments.}  This work was partially supported by the Centre for Mathematics of the University of Coimbra - UIDB/00324/2020, funded by the Portuguese Government through FCT/MCTES.

\

\end{document}